\title{Non-commutative Rank and Semi-stability\\ of Quiver Representations}
\author{Alana Huszar}
\definecolor{linkred}{rgb}{0.6,0,0}
\definecolor{linkblue}{rgb}{0,0,0.6}
\newcommand{\ds}{\displaystyle}
\newcommand{\Mat}{\operatorname{Mat}}
\newtheorem{theorem}{Theorem}[section]
\newtheorem{proposition}[theorem]{Proposition}
\newtheorem{lemma}[theorem]{Lemma}
\newtheorem{corr}[theorem]{Corollary}
\newtheorem{alg}[theorem]{Algorithm}
\theoremstyle{definition}
\newtheorem{definition}[theorem]{Definition}
\newcommand{\rk}{\operatorname{rk}}
\newcommand{\Span}{\operatorname{span}}
\newcommand{\Hom}{\operatorname{Hom}}
\newcommand{\ext}{\operatorname{ext}}
\newcommand{\Rep}{\operatorname{Rep}}
\newcommand{\ncext}{\operatorname{ncext}}
\newcommand{\nchom}{\operatorname{nchom}}
\newcommand{\dimvec}{\underline{\rm dim}}
\newcommand{\im}{\operatorname{Im}}
\newcommand{\A}{\mathcal{A}}
\newcommand{\SL}{\operatorname{SL}}
\newcommand{\GL}{\operatorname{GL}}
\newcommand{\F}{{\mathbb F}}
\newcommand{\Z}{{\mathbb Z}}
\newcommand{\Q}{{\mathbb Q}}
\newcommand{\ncrk}{\operatorname{ncrk}}
\newcommand{\spn}{\operatorname{Span}}
\newcommand{\Ext}{\operatorname{Ext}}
\newcommand{\End}{\operatorname{End}}
\newcommand{\fmap}[2]{f_{#1}^{#2}}
\begin{document}

\begin{abstract}
Fortin and Reutenauer defined the non-commutative rank for a matrix with entries that are linear functions. The  non-commutative rank is related to stability in invariant theory,
non-commutative arithmetic circuits, and Edmonds' problem. We will generalize the  non-commutative rank to the representation theory of quivers and define 
non-commutative Hom and Ext spaces. We will relate these new notions  to King's criterion for $\sigma$-stability
of quiver representations, and the general Hom and Ext spaces studied by Schofield. We discuss polynomial time algorithms that compute
the non-commutative Homs and Exts and find an optimal witness for the $\sigma$-semi-stability
of a quiver representation.

\end{abstract}

\maketitle

\section{Introduction}
Given $A_1,A_2,\dots,A_m$, $n\times n$ matrices over a field $\F$, and $x_1,x_2,\dots,x_m$, variables in the free skew-field defined by Cohn in \cite{Cohn95}, Fortin and Reutenauer \cite{FR04} defined the non-commutative rank $\ncrk(A(\underline{x}))$ as the rank of 
the matrix of linear functions $A(\underline{x})=x_1A_1+x_2A_2+\cdots+x_mA_m$ over the free skew field. The non-commutative rank $\ncrk(A(\underline{x}))$ is also equal to the maximal value of 
$$\frac{\rk(X_1\otimes A_1+X_2\otimes A_2+\cdots+X_n\otimes A_n)}{d}
$$
where $d$ is a positive integer and $X_1,X_2,\dots,X_n$ are $d\times d$ matrices. A third characterization of non-commutative rank is in terms of shrunk subspaces.
Non-commutative rank is related to the notion of stability in geometric invariant theory. Consider the action of $\SL_n\times \SL_n$ on the space $\Mat_{n,n}^m$ of $m$-tuples of $n\times n$ matrices by left-right multiplication.
Then $\underline{A}=(A_1,A_2,\dots,A_m)$ is semi-stable with respect to this action if and only if $A(\underline{x})$ has full non-commutative rank, i.e., $\ncrk(A(\underline{x}))$ is equal to the maximal value $n$.

The example of $m$-tuples of $n\times n$ matrices fits in the broader framework of representation theory of quivers. A quiver $Q$ is just a directed graph,
with vertex set $Q_0$ and arrow set $Q_1$.
If we consider the generalized Kronecker quiver $Q$ with two vertices, say $x$ and $y$, and $m$ arrows from $x$ to $y$
and we choose the dimension vector $\alpha=(n,n)$, then the representation space $\Rep_\alpha(Q)$ of $\alpha$-dimensional representations of $Q$ is equal to $\Mat_{n,n}^m$. To construct a moduli space of $\alpha$-dimensional representations one needs to quotient out the $\SL_n\times \SL_n$ action. The goal of this paper is to generalize the notion of non-commutative rank and its properties to arbitrary quivers and dimension vectors. Geometric invariant theory for quiver representations was studied by King in \cite{King94}. For every quiver $Q$, dimension vector $\alpha\in \Z_{\geq 0}^{Q_0}$ and weight $\sigma\in \Z^{Q_0}$
King constructed a quotient for $\alpha$-dimensional representations of $Q$ with respect to the weight $\sigma$.
So for every weight $\sigma$ there is a notion of semi-stability for quiver representations and King gave a criterion  $\sigma$-semi-stability of a representation of~$Q$. 

In this paper, we connect non-commutative rank to $\sigma$-semi-stability of quiver representations. Through King's Criterion \cite{King94}, we discuss the importance of special subrepresentations that are optimal in witnessing the $\sigma$-semi-stability. We provide a framework to use existing algorithms to find these optimal $\sigma$-witnesses, as well as provide an algorithm using a sequence of subrepresentations. We then generalize work of Schofield on pairs of general representations, general ext, and general hom to pairs with one representation fixed, non-commutative ext, and non-commutative hom. We conclude by using non-commutative rank methods to demonstrate useful inequalities for the non-commutative ext. 

The Edmonds' problem, posed in 1967, asks to determine the rank of the $n\times n$ matrix $A(\underline{x})$, with homogeneous linear polynomials in $\Z[x_1,\ldots,x_m]$ over $\Q(x_1,\ldots,x_m)$ \cite{Edm67}. The decision version of this question, asking whether $A(\underline{x})$ has full rank or not, is known as the symbolic determinant identity testing problem (SDIT). We are considering instead the rank of $A(\underline{x})$ over the free skew field, and so the question of finding $\ncrk(A)$ is the \emph{non-commutative Edmonds' problem}, and the relaxation in simply deciding whether $A(\underline{x})$ has full non-commutative rank is the \emph{non-commutative full rank problem} (NCFullRank). Letting $\A=\Span\{A_1,A_2,\ldots,A_n\}$, we alternatively denote the non-commutative rank of $A(\underline{x})$ by $\ncrk(\A)$. Ivanyos, Qiao, and Subrahmanyam give equivalent formulations and history of NCFullRank in \cite{IQS17}. We are interested in the $c$-shrunk subspace, tensor blow-up, and particularly the nullcone formulations, which are discussed in Section~\ref{ncrkdefs}.

Lots of work from different angles has been done on this non-commutative rank. Cohn and Reutenauer proved NCFullRank was in PSPACE (can be solved using polynomial space) \cite{CR99}. Fortin and Reutenauer connected non-commutative rank explicitly to $c$-shrunk subspaces \cite{FR04}. Coming from studying non-commutative arithmetic circuits with divisions, Hrubes and Wigderson proved that non-commutative rank was equivalent to rank for large enough tensor blow-ups \cite{HW14}. Garg, Gurvitz, Oliveira, and Wigderson provide a polynomial time algorithm of non-commutative rank for fields of characteristic zero. In \cite{IKQS15}, for certain matrix spaces, Karpinski, Ivanyos, Subrahmanyam, and Qiao use Wong sequences to calculate the non-commutative rank. Building on this using blow-ups, the latter three authors provide an algorithm for finding the non-commutative rank of any matrix space \cite{IQS17}. Utilizing results on bounds from \cite{DM18b}, in \cite{IQS18}, they give a deterministic polynomial time algorithm.  

This problem can be expanded to that of finding a subrepresentation of a quiver representation $W$ that demonstrates the representation's semi-stability. This \emph{optimal $\sigma$-witness}, $W'$, can be found using algorithms finding a $c$-shrunk subsapce of a certain matrix space. Work by Chindris and Kline connect this problem to that of simultaneous robust subspace recovery (SRSR) \cite{CK20a}, and provide an algorithm for certifying the semi-stability of $W$ \cite{CK21B}.

In \cite{Scho92}, Schofield explored the dimension of $\Hom(V,W)$ and $\Ext(V,W)$ for a fixed quiver representation $V$, and generic quivers representation $W$ and $V$. This work was generalized by Crawley-Boevey, who described the asymptotic behavior of these dimensions when one of $W$ or $V$ is fixed (rather than both generic) \cite{Craw96}. We re-prove many of these results using insights from non-commutative rank methods, ultimately leading us to a bound on the asymptotic behavior.

\section{Non-commutative Rank}\label{ncrkdefs}
We will be concerned with the free skew field, made up of non-commuting polynomials, $\F\langle x_1,\ldots,x_m\rangle$, their inverses, and then enlarged to contain all sums, products, and inverses. The free skew field was first defined by Amitsur, \cite{Amit66}. In the free skew field, there is no standardized way to express elements, and elements may need to be defined with nested inverses. For example, $(x+yz^{-1}w)^{-1}$ can not be written without a nested inverse \cite{HW14}. 

Given a matrix, $A(\underline{x})$, with homogeneous linear polynomials in $\F\langle x_1,\ldots,x_m\rangle$, the non-commutative analogue of the Edmonds' problem asks to determine the rank of $A(\underline{x})$ over the free skew field. We denote this rank by $\ncrk(A)$. Similarly, the NCFullRank problem asks whether $A(\underline{x})$ has full rank over the free skew field. 
For example, we row reduce the following skew symmetric matrix over the free skew field to get:
\begin{equation}\label{OurExample}
T=\begin{bmatrix}
0 & x_1 & x_2\\ 
-x_1 & 0 &x_3 \\ 
-x_2 & -x_3  & 0
\end{bmatrix} \sim \begin{bmatrix}
0 & x_1 & x_2\\ 
-x_1 & 0 &x_3 \\ 
0 & 0  & x_3x_1^{-1}x_2-x_2x_1^{-1}x_3
\end{bmatrix}.
\end{equation}
Unfortunately, by the nature of the free skew field, it is hard to determine polynomial identities --- so it is not immediately clear if this matrix has non-commutative rank $2$ or $3$. For this reason, we explore additional equivalent formulations of non-commutative rank. For sketches on their equivalence, see \cite{IQS17}. 

We note here that many aspects of rank carry over to the non-commutative rank, for instance, the non-commutative row rank and column rank of $A(\underline{x})$ equal the $\ncrk(A)$ and we must have a minor with full rank equal to $\ncrk(A)$. We must still be careful, as other aspects do not: naively finding the ``determinant'' of $A(\underline{x})$, and comparing it to zero will not tell us whether the non-commutative rank is full (in fact, even how to define a single determinant in this context is unclear) \cite{GR91}.

\subsection{Blow-ups}
If $T=x_1A_1+\ldots x_mA_m$, let $\A=\spn{\{A_1,\ldots,A_m\}}$. The $d$th \emph{tensor blow-up} of $\A$ is $$\A^{\{d\}}:=M(d,\F)\otimes\A \subseteq M(dn,\F).$$ 
The rank of a matrix space, $\rk{\A}$, is the maximal $r$ so that there is a matrix with rank $r$ in $\A$. When $\F$ is large enough, $d$ divides the rank of $\A^{\{d\}}$  \cite{IQS17}. We have $$\ncrk(A)=\lim_{d\rightarrow\infty}\frac{\rk{\A^{\{d\}}}}{d}.$$

We may also write $\ncrk(\A)$ instead of $\ncrk(A)$. The value of $(\rk{\A^{\{d\}}})/d$ is increasing as $d$ increases, and is bounded by $n$. Derksen and Makam proved that if $\A$ has maximal non-commutative rank, then taking $d\geq n-1$ ensures $\rk{\A^{\{d\}}}=nd$ \cite{DM18}. 
If $\ncrk(A)=r<n$, then restricting to a full rank $r\times r$ submatrix of $A(\underline{x})$, we see
that $\rk{\A^{\{d\}}}=nd$ for $d\geq r-1$.
So we always have  $\rk{\A^{\{d\}}}=nd$ for $d\geq n-1$.

For our example~(\ref{OurExample}), take $d=2$. We then look for $2\times 2$ matrices $D_1,D_2,D_3$, so that $A_1\otimes D_1+A_2\otimes D_2+A_3\otimes D_3$ has max rank. Letting 
$$D_1=\begin{bmatrix}
1 & 0\\ 
0 & 0
\end{bmatrix}\!,\; D_2=\begin{bmatrix}
0 & 0\\ 
0 & 1 \\ 
\end{bmatrix}\!,\; D_3=\begin{bmatrix}
0 & 1 \\ 
1 & 0 \\ 
\end{bmatrix},$$

We find $$\rk{\left[
\begin{array}{c|c|c}
0 & D_1 & D_2\\
\hline
-D_1 & 0 & D_3 \\
\hline
-D_2 & -D_3 & 0
\end{array}
\right]}=6,$$
which must be maximal, and so $\ncrk(T)=3$.

\subsection{$c$-shrunk subspaces}
A subspace $U\subseteq \F^n$ is a \emph{$c$-shrunk subspace} of $\A$ if there exists a subspace $W\subseteq \F^n$ with $\dim(W)\leq\dim(U)-c$, and for every $A$ in $\A$, $A(U)\subseteq W$. The NCFullRank problem is equivalent to determining whether $\A$ has no $c$-shrunk subspace for $c>0$~\cite{Cohn95}. More generally \cite{FR04}, 
$$\ncrk(\A)=n-\max\{c \mid \text{there is a $c$-shrunk subspace of $\A$}\}.$$

Throughout the rest of this paper, we let $c=n-\ncrk(\A)$, i.e. all $c$-shrunk subspaces discussed are so that $c$ is maximal. 

\begin{lemma}\label{combinecshrunk}  Let $c=n-\ncrk(\A)$. If $U_1,U_2$ are $c$-shrunk subspaces of $\A$, then so are $U_1\cap U_2$ and $U_1+U_2$.
\end{lemma}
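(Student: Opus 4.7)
The plan is to prove both statements simultaneously by a sandwich argument that hinges on the maximality of $c$. First, I would fix witnesses $W_1, W_2 \subseteq \F^n$ certifying that $U_1$ and $U_2$ are $c$-shrunk, so $A(U_i) \subseteq W_i$ for every $A \in \A$ and $\dim(W_i) \leq \dim(U_i) - c$. The natural candidate witnesses for $U_1+U_2$ and $U_1 \cap U_2$ are $W_1+W_2$ and $W_1 \cap W_2$ respectively: indeed, $A(U_1+U_2) = A(U_1) + A(U_2) \subseteq W_1 + W_2$ and $A(U_1 \cap U_2) \subseteq A(U_1) \cap A(U_2) \subseteq W_1 \cap W_2$ for every $A \in \A$, so both inclusions hold and each candidate $W$ really does shrink the corresponding $U$.

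Next, I would bound the shrunk amounts
$$c_+ := \dim(U_1+U_2) - \dim(W_1+W_2), \qquad c_{\cap} := \dim(U_1 \cap U_2) - \dim(W_1 \cap W_2)$$
certified by these candidates, from both sides. Applying the Grassmann identity to both $(U_1,U_2)$ and $(W_1,W_2)$ and subtracting yields
$$c_+ + c_{\cap} = \dim(U_1) + \dim(U_2) - \dim(W_1) - \dim(W_2) \geq 2c,$$
where the inequality is just the hypothesis $\dim(W_i) \leq \dim(U_i) - c$ summed over $i$. Meanwhile, the maximality of $c$ in the definition $c = n - \ncrk(\A)$ forces $c_+ \leq c$ and $c_{\cap} \leq c$, since otherwise $U_1+U_2$ or $U_1 \cap U_2$ together with the corresponding candidate witness would furnish a $(c{+}1)$-shrunk subspace, contradicting the choice of $c$. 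Sandwiching gives $c_+ = c_{\cap} = c$, so the candidate witnesses above exhibit $U_1+U_2$ and $U_1 \cap U_2$ as $c$-shrunk.

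The main obstacle I anticipate is conceptual rather than computational: the hypotheses on $U_1$ and $U_2$ only combine directly into a bound on the \emph{sum} $c_+ + c_{\cap}$, so there is no obvious way to handle the two conclusions one at a time. Recognizing that the maximality of $c$ supplies matching upper bounds on $c_+$ and $c_{\cap}$ individually — converting the sum bound into an equality in each slot — is the crux of the argument. The rest is a routine application of the Grassmann dimension formula.
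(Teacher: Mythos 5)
Your proof is correct and takes essentially the same approach as the paper: both hinge on combining the Grassmann dimension identity (applied to the $U_i$ and to their witnesses) with the maximality of $c$ to sandwich the two new shrinkage defects. The only cosmetic difference is that the paper takes $\A(U_i)$ as the canonical witness, whereas you allow arbitrary witnesses $W_i$ with $\dim W_i \leq \dim U_i - c$, which is a harmless relaxation.
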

\begin{proof} By assumption $\dim U_i-\dim \A(U_i)=c$. Let $U_3=U_1\cap U_2$, $U_4=U_1+U_2$. We then have:
\begin{multline*}
c+c\geq ( \dim U_3-\dim \A(U_3))+(\dim U_4-\dim \A(U_4))=\\ = ( \dim(U_1\cap U_2)+\dim(U_1+U_2)) -(\dim(\A(U_1)\cap \A(U_2))+\dim(\A(U_1)+\A(U_2)))\geq\\ \geq
(\dim U_1+\dim U_2)-(\dim \A(U_1)+\dim \A(U_2))=\\=(\dim U_1-\dim \A(U_1))+(\dim U_2-\dim \A(U_2))=c+c.
\end{multline*}
We conclude that  $\dim U_3-\dim \A(U_3)=\dim U_4-\dim \A(U_4)=c,$ as $c$ is maximal.
Therefore, $U_3$ and $U_4$ are $c$-shrunk subspaces.
\end{proof}

In particular, there is a unique $c$-shrunk subspace of the lowest dimension, namely, the intersection of all $c$-shrunk subspaces. A recent similar discussion can be found in \cite{IMQ21}. In our skew-symmetric example~(\ref{OurExample}), although any matrix in $\A$ has rank $2$, the image of any subspace $U$ of $\F^3$ has the same dimension as $U$. In this case $c=n-\ncrk(\A)=3-3=0$, and the minimal $c$-shrunk subspace is the zero subspace.

\subsection{Semi-stability of Kronecker quiver}
A quiver $Q$ is a directed graph, with vertex set denoted $Q_0$ and arrow set denoted $Q_1$. A representation $W$ of a quiver $Q$, is an assignment of finite dimensional $\F$ vector spaces $W(x)$ to each $x$ in $Q_0$, and an assignment of linear maps $W(a)$ to each $a$ in $Q_1$. We let $ha$ and $ta$ denote the head and tail vertices of the arrow $a$ respectively. A dimension vector is a function $\alpha:Q_0\to {\mathbb N}=\{0,1,2,\dots\}$.
The dimension vector $\underline{\dim} W$
of a representation $W$ is defined by $(\underline{\dim} W)(x)=\dim W(x)$.
Fixing $Q$ and a dimension vector $\alpha$, there is an action on quiver representations by $\GL(\alpha):=\prod_{x\in Q_0} \GL(\alpha(x))$. The action of $(Y(x),x\in Q_0)$ takes $W(a)$ to $Y(ha)W(a)Y(ta)^{-1}$ for all $a\in Q_1$, and leaves each $W(x)$ with $x\in Q_0$ unchanged. For a path $p=a_ja_{j-1}\cdots a_1$, we denote by $W(p)$ the composition of linear maps $W(a_j)W(a_{j-1})\cdots W(a_1)$. The empty path from vertex $x$ to itself is denoted by $e_x$ and $W(e_x)$ is defined as the identity map of $W(x)$.

The representations of $Q$ with dimension vector $\alpha$ (indexed by the vertices) is denoted $\Rep_\alpha(Q)$. A representation is semi-stable if its orbit closure does not contain the zero representation; representations that are not semi-stable define the \emph{nullcone}. No acyclic quiver representations are semi-simple. Instead, for a weight $\sigma$ in $\Z^{Q_0}$, we additionally use the $1$-dimensional representation $\chi_\sigma$, a \emph{character} with action of $\GL(\alpha)$ given by multiplication by
$$\chi_\sigma(Y(x),x\in Q_0)=\prod_{x\in Q_0}\det(Y(x))^{\sigma(x)}.$$
A representation $W$ is $\sigma$ semi-stable if $(W,1)$ is semi-stable in $\Rep_\alpha(Q)\oplus \chi_\sigma$.

The NCFullRank problem for $T=x_1A_1+\ldots,x_mA_m$ is equivalent to determining whether the quiver representation $W$,

\begin{center}
\begin{tikzcd}
\F^n \arrow[r, draw=none, "\raisebox{+1.5ex}\vdots" description] \arrow[r,shift left=1.7ex,"A_1"]  \arrow[r,shift right=1.2ex,swap,"A_m"] & \F^n
\end{tikzcd}\end{center}

is $\sigma$-semistable, for $\sigma=(1,-1)$. In our skew-symmetric matrix example~(\ref{OurExample}), we would like to determine whether the above quiver with
$$A_1=\begin{bmatrix}
0 & 1 & 0\\ 
-1 & 0 & 0\\ 
0 & 0 & 0
\end{bmatrix}\!,\; A_2=\begin{bmatrix}
0 & 0 & 1\\ 
0 & 0 & 0\\ 
-1 & 0 & 0
\end{bmatrix}\!,\; A_3=\begin{bmatrix}
0 & 0 & 0\\ 
0 & 0 & 1\\ 
0 & -1 & 0
\end{bmatrix}$$
is $(1,-1)$-semi-stable.

We would like to be able to relate quiver representations to the non-commutative rank, rather than just to NCFullRank. To do this, we need a way of measuring how far a representation $V$, is from being $\sigma$-semistable. For this, we use King's Criterion \cite{King94}. For a representation $W$, let $\sigma(\dimvec(W))=\sum \dim(W(x))\sigma(x)$.

\begin{proposition}[King's Criterion, \cite{King94}]
A representation $W$ in $\Rep_\alpha(Q)$ is $\sigma$-semi-stable if and only if $\sigma(\alpha)=0$ and $\sigma(\dimvec(W))\leq 0$ for all subrepresentations $W'$ of $W$.
\end{proposition}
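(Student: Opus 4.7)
The plan is to apply the Hilbert--Mumford numerical criterion for semi-stability to the pair $(W,1)\in \Rep_\alpha(Q)\oplus\chi_\sigma$ under the $\GL(\alpha)$-action, and then translate 1-parameter subgroups into filtrations of $W$ by subrepresentations.

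First I would dispose of the condition $\sigma(\alpha)=0$. The diagonal central subgroup $\{(t\cdot\mathrm{id}_{W(x)})_{x\in Q_0}:t\in\F^*\}\subseteq\GL(\alpha)$ acts trivially on $\Rep_\alpha(Q)$ but acts on $\chi_\sigma$ by $t\mapsto t^{\sigma(\alpha)}$. If $\sigma(\alpha)\ne 0$, then either this 1-PS or its inverse drives $(W,1)$ to $(W,0)$ in the limit, so $W$ cannot be $\sigma$-semi-stable; thus $\sigma(\alpha)=0$ is forced. Assuming this, Hilbert--Mumford says $(W,1)$ is semi-stable if and only if for every 1-PS $\lambda:\F^*\to\GL(\alpha)$ for which $W_0:=\lim_{t\to 0}\lambda(t)\cdot W$ exists in $\Rep_\alpha(Q)$, the weight $\mu(\sigma,\lambda)$ of $\lambda$ on $\chi_\sigma$ satisfies $\mu(\sigma,\lambda)\le 0$.

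Next I would encode each 1-PS as a $\Z$-grading. After conjugating into a maximal torus, $\lambda$ corresponds to a weight decomposition $W(x)=\bigoplus_{n\in\Z}W(x)_n$ at every vertex, with descending filtration $W^{\ge n}(x)=\bigoplus_{m\ge n}W(x)_m$. The limit $W_0$ exists exactly when each arrow map $W(a):W(ta)\to W(ha)$ sends $W^{\ge n}(ta)$ into $W^{\ge n}(ha)$, that is, when $(W^{\ge n})_{n\in\Z}$ is a filtration of $W$ by subrepresentations. A direct calculation followed by Abel summation yields
\begin{equation*}
\mu(\sigma,\lambda)=\sum_{x\in Q_0}\sigma(x)\sum_{n\in\Z}n\cdot\dim W(x)_n=\sum_{n\ge 1}\sigma\bigl(\dimvec W^{\ge n}\bigr),
\end{equation*}
after shifting indices so that $W^{\ge 0}=W$; the boundary term in the summation by parts is a multiple of $\sigma(\alpha)$ and so vanishes.

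It remains to prove the equivalence. If every subrepresentation $W'\subseteq W$ satisfies $\sigma(\dimvec W')\le 0$, then each summand $\sigma(\dimvec W^{\ge n})$ in the formula above is $\le 0$, so $\mu(\sigma,\lambda)\le 0$ for every admissible $\lambda$ and semi-stability follows. Conversely, given a single subrepresentation $W'\subseteq W$, choosing any vector space splittings $W(x)=W'(x)\oplus C(x)$ produces a 1-PS $\lambda$ whose weight $+1$ part is $W'$ and whose weight $0$ part is $C$; its limit exists (since $W'$ is a subrepresentation) and its character weight is exactly $\sigma(\dimvec W')$, so semi-stability forces $\sigma(\dimvec W')\le 0$. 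The main obstacle is the bookkeeping in the computation of $\mu(\sigma,\lambda)$: one must keep the sign and filtration-direction conventions consistent through Abel summation, and verify carefully that every 1-PS whose limit exists arises from a grading by subrepresentations after a change of basis at each vertex.
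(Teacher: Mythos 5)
The paper quotes King's Criterion from \cite{King94} without reproducing a proof, so there is no argument in the source to compare yours against. Your outline follows the route King himself takes: pass to a numerical criterion for $(W,1)$, identify one-parameter subgroups with $\Z$-gradings, note that $\lim_{t\to 0}\lambda(t)\cdot W$ exists exactly when the associated filtration is by subrepresentations, and compute $\mu(\sigma,\lambda)=\sum_{n\geq 1}\sigma(\dimvec W^{\geq n})$. The filtration dictionary and the computation of $\mu$ are both correct.

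There is, however, one real gap. You state that Hilbert--Mumford characterizes semi-stability of $(W,1)$ by $\mu(\sigma,\lambda)\leq 0$ for every $\lambda$ whose limit on $W$ merely \emph{exists} in $\Rep_\alpha(Q)$. The affine Hilbert--Mumford criterion only says $(W,1)$ is unstable iff there is some $\lambda$ with $\lim_{t\to 0}\lambda(t)\cdot(W,1)=0$, which forces both $\lim\lambda(t)\cdot W=0$ and $\mu>0$; the ``limit exists'' version you invoke is King's Proposition 2.5, a strictly stronger statement requiring its own argument. The discrepancy bites exactly where you use it: the $\lambda$ you build from a subrepresentation $W'$ has limit equal to the associated graded $W'\oplus W/W'$, which is usually nonzero, so Hilbert--Mumford alone does not let you conclude $\sigma(\dimvec W')\leq 0$ from semi-stability. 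For acyclic $Q$ the fix is short, namely that $0\in\overline{\GL(\alpha)\cdot W_0}$ for every representation $W_0$, so that $(W_0,0)\in\overline{\GL(\alpha)\cdot(W,1)}$ already places $0$ in the orbit closure; the same observation is also what your opening step needs, since the central one-parameter subgroup only produces $(W,0)$, not $(0,0)$, in the closure when $\sigma(\alpha)\neq 0$. You should either supply this extra step explicitly or cite King's Proposition 2.5 rather than attributing the ``limit exists'' criterion to Hilbert--Mumford.
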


\begin{proposition}\label{cshrunksubrep}
    Given $\A=\spn{\{A_1,\ldots,A_m\}}$, and $c$ the maximum $\sigma(\dimvec(W'))$ over all subrepresentations $W'$ of the Kronecker quiver with maps $\{A_i\}$, the $\ncrk(\A)=n-c$.
\end{proposition}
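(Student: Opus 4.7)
The plan is to translate the combinatorial definition of a subrepresentation of the generalized Kronecker quiver into the language of $c$-shrunk subspaces, and then invoke the Fortin--Reutenauer characterization of $\ncrk$ already stated in the section on $c$-shrunk subspaces.

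First I would unpack what a subrepresentation $W'$ of the quiver $W$ looks like. The quiver has two vertices, call them $x$ and $y$, with $m$ parallel arrows from $x$ to $y$, and $W(x)=W(y)=\F^n$ with $W(a_i)=A_i$. A subrepresentation $W'$ is a pair of subspaces $U=W'(x)\subseteq \F^n$ and $V=W'(y)\subseteq \F^n$ satisfying $A_i(U)\subseteq V$ for every $i$, equivalently $\A(U)\subseteq V$. With the weight $\sigma=(1,-1)$, one has $\sigma(\dimvec(W'))=\dim U-\dim V$ (and $\sigma(\alpha)=n-n=0$, as required by King's criterion).

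Next I would observe that, for a fixed $U$, the quantity $\dim U-\dim V$ subject to $\A(U)\subseteq V$ is maximized by taking $V=\A(U)$, giving the value $\dim U-\dim \A(U)$. Therefore
\[
c=\max_{W'\subseteq W}\sigma(\dimvec(W'))=\max_{U\subseteq \F^n}\bigl(\dim U-\dim \A(U)\bigr).
\]
The right-hand side is, by definition, the largest integer $c$ for which $\A$ admits a $c$-shrunk subspace: if $U$ and $V$ witness a $c$-shrunk subspace, then $c\le \dim U-\dim \A(U)$; conversely any $U$ attains the bound by choosing $V=\A(U)$.

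Finally I would invoke the Fortin--Reutenauer formula quoted earlier,
\[
\ncrk(\A)=n-\max\{c\mid \text{there is a $c$-shrunk subspace of $\A$}\},
\]
to conclude $\ncrk(\A)=n-c$. There is no real obstacle here; the entire argument is a dictionary translation, and the only thing to be careful about is verifying that the optimum over $(U,V)$ really is attained at $V=\A(U)$, which is automatic from the monotonicity of $\dim U-\dim V$ in $\dim V$.
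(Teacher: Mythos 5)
Your proof is correct and takes essentially the same approach as the paper: set up the bijection between subrepresentations $W'$ of the Kronecker quiver (maximizing $\sigma(\dimvec(W'))=\dim W'(x)-\dim W'(y)$) and $c$-shrunk subspaces $U=W'(x)$ (with optimal target $V=\A(U)$), then invoke the Fortin--Reutenauer formula $\ncrk(\A)=n-\max\{c\}$. Your write-up is a bit more explicit than the paper's in pointing out that the optimal choice of $W'(y)$ is exactly $\A(W'(x))$, but the substance is the same.
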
 

\begin{proof}
If $W$ is a Kronecker quiver, and $\sigma=(1,-1)$, let $W'$ be a subrepresentation with $c:=\sigma(\dimvec(W'))$ maximal. Then, since $\A(W'(x))$ is contained in $W'(y)$, $W'(x)$ is a $c$-shrunk subspace. On the other hand, if instead we start with a $c$-shrunk subspace $U$,  $W'(x):=U$, and $W'(y):=\sum_{i=1}^{m}A_i\, U$. This defines a subrepresentation $W'$, where $\sigma(\dimvec(W'))=c$. So for Kronecker quivers, $c$-shrunk subspaces give us subrepresentations $W'$ with $\sigma(\dimvec(W'))$ maximal, and vice-versa. 
\end{proof}

So, the non-commutative rank of $\A$ is equal to the maximum of $\sigma(\dimvec(W'))$ over all subrepresentations $W'$ of the Kronecker quiver with maps $A_1,\ldots,A_n$.

\section{Reduction to Kronecker Quiver}\label{reduction}

Through Proposition \ref{cshrunksubrep}, if a representation $W$ is not $\sigma$-semi-stable, we can still measure its closeness to $\sigma$-semi-stability by finding a subrepresentation $W'$ with $\sigma(\dimvec(W'))$ maximal. In \cite{CK21B}, this is called the \emph{discrepancy} of $(W,\sigma)$. Note that we are now no longer limited to the Kronecker quiver --- we can now ask this question for any acyclic quiver $W$, for any~$\sigma$. We call a subrepresentation $W'$ which maximizes $c=\sigma(\dimvec(W'))$ an optimal $\sigma$-witness. When $\sigma$ is understood, we call this $W'$ an \emph{optimal witness}. We can generalize Lemma \ref{combinecshrunk} for subrepresentations.

\begin{proposition}
If $W_1,W_2$ are optimal $\sigma$-witnesses of $W$, then so are $W_1\cap W_2$ and $W_1+W_2$. In particular, there is a minimal and maximal optimal $\sigma$-witness.
\end{proposition}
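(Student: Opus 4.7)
The plan is to mimic the argument used for Lemma \ref{combinecshrunk}, but carried out vertex-by-vertex and then summed with the weights $\sigma(x)$. Write $c = \sigma(\dimvec(W_1)) = \sigma(\dimvec(W_2))$ for the common maximum value of $\sigma(\dimvec(W'))$ over subrepresentations $W' \subseteq W$.

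First, I would check that $W_1 \cap W_2$ and $W_1 + W_2$ are genuinely subrepresentations of $W$: for each arrow $a \in Q_1$, the map $W(a)$ sends $W_i(ta)$ into $W_i(ha)$ for $i = 1, 2$, so it sends $(W_1 \cap W_2)(ta) = W_1(ta) \cap W_2(ta)$ into $W_1(ha) \cap W_2(ha)$, and similarly it sends $W_1(ta) + W_2(ta)$ into $W_1(ha) + W_2(ha)$. Thus the vertex-wise intersection and sum inherit the structure of subrepresentations of $W$.

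Next, I would use the standard identity of vector-space dimensions at each vertex $x \in Q_0$:
\[
\dim(W_1 \cap W_2)(x) + \dim(W_1 + W_2)(x) = \dim W_1(x) + \dim W_2(x).
\]
Multiplying by $\sigma(x)$ and summing over $x \in Q_0$ gives
\[
\sigma(\dimvec(W_1 \cap W_2)) + \sigma(\dimvec(W_1 + W_2)) = \sigma(\dimvec(W_1)) + \sigma(\dimvec(W_2)) = 2c.
\]
Since both $W_1 \cap W_2$ and $W_1 + W_2$ are subrepresentations of $W$, each of the two summands on the left is at most $c$ by the maximality defining an optimal $\sigma$-witness. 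The only way their sum can equal $2c$ is if both equal $c$, so both are optimal $\sigma$-witnesses.

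There is no real obstacle here beyond verifying the subrepresentation property, which is routine. For the final sentence, uniqueness of a minimal and maximal optimal $\sigma$-witness follows formally: the collection of optimal $\sigma$-witnesses is nonempty and closed under finite intersections and sums, so the intersection of all of them (resp.\ the sum of all of them, which by finite-dimensionality is achieved by finitely many) is itself an optimal $\sigma$-witness and is contained in (resp.\ contains) every other such witness.
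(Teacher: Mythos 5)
Your proof is correct and takes essentially the same route as the paper: both arguments rest on the vertex-wise dimension identity $\dim(W_1\cap W_2)(x)+\dim(W_1+W_2)(x)=\dim W_1(x)+\dim W_2(x)$, multiply by $\sigma(x)$, sum, and then invoke maximality of $c$ to force both terms to equal $c$. If anything your write-up is a bit cleaner---the paper splits $\sigma$ into $\sigma_+$ and $\sigma_-$ only to recombine them as $\sigma=\sigma_+-\sigma_-$, which adds nothing here, and it leaves implicit the check that $W_1\cap W_2$ and $W_1+W_2$ are subrepresentations, which you verify explicitly.
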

\begin{proof}
Let $c$ be the discrepancy of $(W,\sigma)$. Let $\sigma_+(x)=\max\{0,\sigma(x)\}$, and similarly, $\sigma_-(x)=-\min\{0,\sigma(x)\}$. For $i=1,2$, by assumption $$\sigma(\dimvec(W_i))=\sum_{x\in Q_0} \big(\sigma_+(x)-\sigma_-(x)\big)\dim W_i(x)=c.$$ Let $W_3=W_1\cap W_2$, $W_4=W_1 + W_2$. We then have:
\begin{multline*}
c+c\geq \sum \big(\sigma_+(x)-\sigma_-(x)\big)\dim W_3(x)+\big(\sigma_+(x)-\sigma_-(x)\big)\dim W_4(x)=\\=
\sum \sigma_+(x)\dim W_3(x)-\sigma_-(x)\dim W_3(x)+\sigma_+(x)\dim W_4(x)-\sigma_-(x)\dim W_4(x)=\\=
\sigma_+(x)\big(\dim W_1(x)+\dim W_2(x)\big)-\sigma_-(x)\big(\dim W_1(x)+\dim W_2(x)\big)=c+c.
\end{multline*}
We conclude that  $\sigma(\dimvec(W_3))=\sigma(\dimvec(W_4))=c,$ as $c$ is maximal.
Therefore, $W_3$ and $W_4$ are optimal $\sigma$-witnesses. We can find a minimal optimal $\sigma$-witness by taking the intersection of all optimal $\sigma$-witnesses, and similarly find a maximal optimal $\sigma$-witness by taking the sum of all optimal $\sigma$-witnesses.
\end{proof}

We would like to extend the techniques in \cite{IQS17} in order to find an optimal $\sigma$-witness. To do this, we reduce any acyclic quiver to the Kronecker quiver. We use the construction described in \cite{DM18}, but provide an altered set up, using presentations as in \cite{DF15}. Let $P_x$ be the indecomposable representation of $Q$ with basis given by all paths starting at vertex ~$x$. 

Let $P_x$ be the indecomposable projective representation corresponding to vertex $x$. So, $P_x(y)=e_y\F Qe_x$, with basis given by paths from $x$ to $y$. Let $\mathbf{P_1}:=\bigoplus_{x\in Q_0}P_x\,^{\sigma_-(x)}$, and $\mathbf{P_0}:=\bigoplus_{x\in Q_0}P_x\,^{\sigma_+(x)}$. Consider all possible morphisms $\varphi$ between the quiver representations $$\varphi:\mathbf{P_1}\rightarrow\mathbf{P_0}.$$
To our above set of morphisms, apply $\Hom(\cdot,W)$ to get
$$A(\varphi):\Hom(\mathbf{P_0},W)\rightarrow\Hom(\mathbf{P_1},W),$$
where $A(\varphi):=\Hom(\varphi,W)$. We can consider a subspace $\Hom(P_x\,^{\sigma_+(x)},W')$ as $Z_+(x)\otimes W'(x)$ for some $Z_+(x)=\F\,^{\sigma_+(x)}$. Notice $\Hom(\mathbf{P_0},W)$ is a right $\End(\mathbf{P_0})$-module by precomposition. Let $x,y$ be so that both $\sigma_+(x)$ and $\sigma_+(y)$ are positive. Note $\End(\mathbf{P_0})$ contains $H=\prod_{x\in Q_0}\GL(\sigma_+(x))$, a reductive group, which acts on the $Z_+(x)$, leaving the $W(x)$ alone. So, an $H$-subrepresentation of $\bigoplus_{x\in Q_0} Z_+(x)\otimes W(x)$ must be of the form $\bigoplus_{x\in Q_0}Z_+(x)\otimes W'(x)$ for some subspaces $W'(x)$ of each $W(x)$. Our set of maps can also be considered between the spaces 

$$A(\varphi):\bigoplus_{x\in Q_0} W(x)\,^{\sigma_+(x)}\rightarrow\bigoplus_{x\in Q_0}W(x)\,^{\sigma_-(x)}.$$

Now, we have a matrix space $\A$ consisting of all $A(\varphi)$. This is the space of block matrices with blocks mapping $W(x)$ to $W(y)$ given by a linear combination of $W(p)$, where $p$ is a path from $x$ to $y$. For this new Kronecker Quiver, we may run the algorithm in \cite{IQS17} to get the minimal $c-$shrunk subspace of $\bigoplus_{x\in Q_0} W(x)\,^{\sigma_+(x)}$, $U$. 

\begin{lemma}
The minimal $c$-shrunk subspace, $U\subseteq\Hom(\mathbf{P_0},W)$, is a left $\End(\mathbf{P_0})$ module, and $\sum_\varphi A(\varphi) U$ is a left $\End(\mathbf{P_1})$ module.
\end{lemma}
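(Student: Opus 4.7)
The plan is to combine the bimodule structure on $\Hom(\mathbf{P_1},\mathbf{P_0})$ with an extremal-subspace argument analogous to Lemma \ref{combinecshrunk}. The key preliminary is that $\Hom(\mathbf{P_1},\mathbf{P_0})$ is naturally an $(\End(\mathbf{P_0}),\End(\mathbf{P_1}))$-bimodule via pre- and post-composition: for $\zeta\in\End(\mathbf{P_0})$, $\xi\in\End(\mathbf{P_1})$, and $\varphi\in\Hom(\mathbf{P_1},\mathbf{P_0})$, the map $\zeta\varphi\xi$ again lies in $\Hom(\mathbf{P_1},\mathbf{P_0})$. Applying the contravariant functor $\Hom(\cdot,W)$ gives the operator identity $A(\zeta\varphi\xi)=A(\xi)\,A(\varphi)\,A(\zeta)$, and in particular the inclusions $\A\,A(\zeta)\subseteq \A$ and $A(\xi)\,\A\subseteq \A$.

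For the first claim, fix $\zeta\in\End(\mathbf{P_0})$ and set $U':=U+A(\zeta)U$. Since $\A\,A(\zeta)\subseteq\A$, we have $\A\bigl(A(\zeta)U\bigr)\subseteq \A U$, and hence $\A U'=\A U+\A\bigl(A(\zeta)U\bigr)\subseteq \A U$. Therefore $\dim \A U'\leq \dim \A U=\dim U-c$. By maximality of $c$, every subspace $X$ satisfies $\dim X-\dim \A X\leq c$, so
\[
c\geq \dim U'-\dim \A U'\geq \dim U'-(\dim U-c),
\]
forcing $\dim U'\leq \dim U$. Combined with $U\subseteq U'$, this gives $U'=U$, i.e.\ $A(\zeta)U\subseteq U$. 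Thus $U$ is stable under the $\End(\mathbf{P_0})$-action. Note that this argument uses only that $U$ is a $c$-shrunk subspace with $c$ maximal, not the minimality of $U$; so in fact every $c$-shrunk subspace is $\End(\mathbf{P_0})$-invariant.

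For the second claim, any $\xi\in\End(\mathbf{P_1})$ satisfies $A(\xi)\bigl(\A U\bigr)=\bigl(A(\xi)\A\bigr)U\subseteq \A U$, so $\sum_{\varphi}A(\varphi)U=\A U$ is stable under the $\End(\mathbf{P_1})$-action. The main point requiring care is the contravariance bookkeeping needed to translate the bimodule identity on $\Hom(\mathbf{P_1},\mathbf{P_0})$ into the correct operator-composition inclusions for $\A$; once those inclusions are in hand, the proof is an almost immediate consequence of the same dimension-counting principle used in Lemma \ref{combinecshrunk}.
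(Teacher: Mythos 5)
Your proof is correct, and it takes a genuinely different route from the paper's. The paper's argument shows that for any \emph{invertible} $T\in\End(\mathbf{P_0})$ the image $\sum_\varphi A(\varphi)(T\cdot U)$ is unchanged (by re-indexing the sum via the bijection $\varphi\mapsto T\circ\varphi$), deduces that $T\cdot U$ is $c$-shrunk of the same dimension, invokes \emph{minimality} of $U$ to force $T\cdot U=U$, and then extends to all of $\End(\mathbf{P_0})$ via the fact that the endomorphism algebra is spanned by its units; the second claim is handled analogously. You instead isolate the bimodule inclusions $\A\,A(\zeta)\subseteq\A$ and $A(\xi)\,\A\subseteq\A$ coming from contravariance of $\Hom(\cdot,W)$, and run the same dimension-count used in Lemma~\ref{combinecshrunk} on $U'=U+A(\zeta)U$: since $\A U'\subseteq \A U$ and $c$ is maximal, $\dim U'\leq\dim U$, forcing $U'=U$. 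This buys you two things the paper's argument does not: you never need invertibility (hence no appeal to the units spanning $\End(\mathbf{P_0})$), and, as you correctly observe, you never need minimality of $U$ for the first claim --- every $c$-shrunk subspace with $c$ maximal is $\End(\mathbf{P_0})$-stable, a strictly stronger conclusion. For the second claim your argument is even simpler, since $A(\xi)\,\A U\subseteq\A U$ holds for an arbitrary subspace $U$ once $A(\xi)\,\A\subseteq\A$ is established. The one thing the paper's route buys is that it avoids re-invoking the submodularity-style dimension inequality and instead relies only on the elementary symmetry under the $\End(\mathbf{P_0})$-action; but your version is cleaner and more general, and the contravariance bookkeeping ($A(\zeta\varphi\xi)=A(\xi)A(\varphi)A(\zeta)$) is handled correctly.
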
\label{fixedlem}
\begin{proof}
First, we prove that given any $c$-shrunk subspace, $U$, and invertible $T$ in $\End(\mathbf{P_0})$, $T\cdot U$ is also $c$-shrunk. We have the image of $T \cdot U$:

$$
\sum_\varphi A(\varphi) (T \cdot U)=\sum_\varphi A(\varphi \cdot T)U =\sum_\varphi A(\varphi) U. 
$$
Here the sum is taken over all morphisms $\varphi$ as above. 
It follows that
$$
\dim \sum_\varphi A(\varphi) (T \cdot U)=\dim \sum_{\varphi} A(\varphi) U.
$$
As $T$ is an automorphism, we also have $\dim T \cdot U=\dim U$, so $T \cdot U$ is $c$-shrunk. If $U$ is the minimal $c$-shrunk subspace, $T \cdot U$ is also $c$-shrunk and of the same dimension, so $T \cdot U=U$. As $\End(\mathbf{P_0})$ is spanned by invertible elements, this shows that the minimal $c$-shrunk subspace $U$ is a left $\End(\mathbf{P_0})$ module. Similarly, given $S$ in $\End(\mathbf{P_1})$, we see that \[S\cdot\sum_\varphi A(\varphi) (U)=\sum_\varphi A(S\cdot\varphi)U =\sum_\varphi A(\varphi) U.\]
\end{proof}

\begin{theorem}
Given the minimal $c-$shrunk subspace for the set of linear maps
$$A(\varphi):\bigoplus_{x\in Q_0} W(x)\,^{\sigma_+(x)}\rightarrow\bigoplus_{x\in Q_0}W(x)\,^{\sigma_-(x)},$$
we can construct a subrepresentation of $W$, $W'$, so that $\sigma(\dimvec(W'))$ is maximal. Furthermore, $\sigma(\dimvec(W'))=c$. 
\end{theorem}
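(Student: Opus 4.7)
The plan is to read off a subrepresentation of $W$ directly from the structure of the minimal $c$-shrunk subspace $U$ and its image $V:=\sum_\varphi A(\varphi)U$, using the $\End(\mathbf{P_0})$- and $\End(\mathbf{P_1})$-module structures established in the preceding lemma. The first step is to exploit that $U$ is a left $\End(\mathbf{P_0})$-module. The reductive subgroup $H=\prod_x \GL(\sigma_+(x)) \subseteq \End(\mathbf{P_0})$ acts on $\bigoplus_x Z_+(x)\otimes W(x)$ only through the multiplicity factors $Z_+(x)$, so its isotypic decomposition forces
$$U = \bigoplus_{x\in Q_0} Z_+(x)\otimes W_0(x)$$
for unique subspaces $W_0(x)\subseteq W(x)$, taken to be $0$ when $\sigma_+(x)=0$. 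The same argument applied to $V$ gives $V=\bigoplus_y Z_-(y)\otimes W_1(y)$. From the rest of the $\End(\mathbf{P_0})$-action---the part coming from non-identity paths---one extracts the closure relation $W(p)(W_0(z))\subseteq W_0(x)$ for every path $p:z\to x$ between vertices with $\sigma_+(z),\sigma_+(x)>0$.

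The next step is to package $W_0$ into an honest subrepresentation of $W$. The matrix space $\A$ has, by construction, its $(x,y)$-block equal to $\Mat(\sigma_-(x),\sigma_+(y))\otimes\Span\{W(p):p\text{ a path }y\to x\}$, so expanding $V=\A U$ yields the explicit formula
$$W_1(x) = \sum_{y:\,\sigma_+(y)>0}\;\sum_{p:\,y\to x} W(p)\big(W_0(y)\big).$$
I will define $W'(x)$ to be exactly the right-hand side above, now ranging over all vertices $x$ (including the empty path when $x=y$ and $\sigma_+(x)>0$). This is automatically a subrepresentation of $W$, since path concatenation gives $W(a)(W'(x))\subseteq W'(z)$ for every arrow $a:x\to z$. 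Using the closure relation from the first step, $W'(x)=W_0(x)$ at every vertex with $\sigma_+(x)>0$, while by construction $W'(x)=W_1(x)$ at every vertex with $\sigma_-(x)>0$. Because $\sigma_+$ and $\sigma_-$ have disjoint supports, this gives the dimension count
$$\sigma(\dimvec(W')) = \sum_x \sigma_+(x)\dim W_0(x)-\sum_x \sigma_-(x)\dim W_1(x) = \dim U-\dim V = c.$$

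For the maximality statement I will run the construction in reverse: any subrepresentation $W''\subseteq W$ produces a candidate $U''=\bigoplus_x Z_+(x)\otimes W''(x)$ whose $\A$-image lies in $\bigoplus_y Z_-(y)\otimes W''(y)$ (because $W''$ is closed under all paths of $Q$), so $\dim U''-\dim\A(U'')\geq \sigma(\dimvec(W''))$, and maximality of $c$ among shrinkages of subspaces of $\bigoplus_x Z_+(x)\otimes W(x)$ forces $\sigma(\dimvec(W''))\leq c$. I expect the main obstacle to be the bookkeeping in the middle paragraph---in particular, verifying that the path-extension does not strictly enlarge $W_0(x)$ at vertices where $\sigma_+(x)>0$, which is exactly where the full $\End(\mathbf{P_0})$-invariance (beyond $H$) is needed.
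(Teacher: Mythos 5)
Your proof is correct and follows essentially the same route as the paper's: decompose the minimal $c$-shrunk subspace $U$ as $\bigoplus_x Z_+(x)\otimes W_0(x)$ via $H$-invariance, extend by pushing forward along paths of $Q$ to define $W'$ at the remaining vertices, and establish maximality by observing that any subrepresentation $W''$ gives a shrunk subspace $\bigoplus_x Z_+(x)\otimes W''(x)$ with shrinkage at least $\sigma(\dimvec W'')$. Your write-up is somewhat more careful on two points the paper's proof leaves implicit: the explicit identification of $\A U$ with $\bigoplus_y Z_-(y)\otimes W_1(y)$, giving the exact equality $\sigma(\dimvec W')=c$ rather than only an inequality, and the use of the full $\End(\mathbf{P_0})$-invariance (beyond $H$) to verify that the path-extension does not enlarge $W_0(x)$ at vertices with $\sigma_+(x)>0$.
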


\begin{proof}
Considered as a subspace of $\bigoplus Z(x)\otimes W(x)$, the minimal $c$-shrunk $U$ is of the form $\bigoplus Z(x)\otimes W'(x)$, for some subspaces $W'(x)$ of $W(x)$. For $y$ so that $\sigma_+(y)=0$, define $\displaystyle W'(y)=\sum_{a:x\rightarrow y}W(a)W'(x)$. This ensures we have a subrepresentation. Note that $c\leq \sum\dim(W'(x)^{\sigma_+(x)})-\sum\dim(W'(x)^{\sigma_-(x)})$, but $c$ is maximal, so $\sigma(\dimvec(W'))=c$. We note that the $W'(y)$ are similarly closed under the action of $\End(\mathbf{P_1})$.

If there were a subrepresentation $W''$ with $\sigma(\dimvec(W''))$ less than $c$, Note that $U'=\displaystyle\bigoplus_{x\in Q_0} W''(x)^{\sigma_+(x)}$ is a shrunk subspace, with $\dim(U')-\dim(\A( U'))>c$, so $c$ would not be maximal.
\end{proof}

\subsection{Algorithms}

After using this reduction of a quiver representation to a Kronecker quiver, we can employ any previous algorithms or other techniques for finding a $c-$shrunk subspace. 
 If we successfully find a $c-$shrunk subspace, $U$, that is not minimal, we can construct a $c-$shrunk subspace that is fixed under the action of $\End(P_1)$ by taking instead $$\bigcap_{T\in \End(\mathbf{P_1})}T\cdot U.$$ Such a subspace will give a optimal $\sigma$ witness. We may use a basis of $\End(\mathbf{P_1})$ to get this subspace in polynomial time. 
 
 In \cite{IKQS15}, Wong sequences, originally defined by Kai-Tek Wong \cite{Wong74}, are used in certain cases to find a $c-$shrunk subspace. In \cite{IQS17}, blow-ups are used to extend the original algorithm to find a $c-$shrunk subspace for any collection of matrices. The algorithm takes a matrix $A$ in $\A$, constructing a sequence starting with $W_0=0$, and letting $W_{i+1}=\A A^{-1}(W_i)$. This sequence stabilizes to some subspace, $W^*$. In the case that $W^*$ is contained in $\im{A}$, $A^{-1}(W^*)$ is a $c-$shrunk subspace with $c$ maximal and equal to $n-\rk{A}$. In this case, where the algorithm returns a $c-$shrunk subspace, we claim the subspace is minimal.

The minimal shrunk subspace, $U$, is the intersection of all $c-$shrunk subspaces, so $U\subseteq A^{-1}(W^*)$. The limit of the sequence $W^*$ is the smallest subspace $Z$ so that $\displaystyle\bigcup_{i=1}^m A_i^{-1}(Z)$ contains $A^{-1}(Z)$. So by minimality, $U$ is returned when this sequence terminates with $W^*$ contained in $\im(A)$. In the case where blow-ups are invoked to find a $c-$shrunk subspace, the same sequence is used in the larger space, finding a $cd-$shrunk subspace. This by the same reasoning must be minimal, so when pulled back to a $c-$shrunk subspace in the original space, it must remain minimal.  

Let $n=\min\{\sum \sigma_+(x)\dim W(x),\sum \sigma_-(y)\dim W(y)\}$. For sufficiently large fields, ($|\F|>2n$) there is a randomized algorithm to find a $c-$shrunk subspace \cite[Corollary 1.5]{IQS17}. This randomized algorithm is much simpler and typically must faster than the deterministic algorithm. In the context of representations, this algorithm immediately after reduction, blow up by a sufficiently large \cite{IQS18,DM18} factor, $d\geq n-1$. In this blow-up, randomly choose a matrix
$$A:\bigoplus_{x\in Q_0} W(x)\,^{d\sigma_+(x)}\rightarrow\bigoplus_{x\in Q_0}W(y)\,^{d\sigma_-(y)},$$
where $A$ is in $\A^{\{d\}}:=M(d,\F)\otimes\A$.
Through the Schwarz-Zippel-DeMillo-Lipton lemma \cite{Schw80,Zip79,DL78}, if a field is large enough, evaluating a non-zero polynomial over that field at a randomly chosen point is likely to give a non-zero result. Taking the determinant of minors of a matrix in the blow-up, we are likely to have $\rk A= \ncrk \A^{\{d\}}$. Thus, running the Wong sequence on this $A$ will result in the return of a $cd-$shrunk subspace \cite[Lemma 9]{IKQS15}. From this $cd$-shrunk subspace in the blow-up, we can find a $c-$shrunk subspace of $\bigoplus_{x\in Q_0} W(x)\,^{\sigma_+(x)}$, constructing a subrepresentation as above.

The deterministic Wong sequence algorithm for finding non-commutative rank, introduced in \cite{IKQS15}, uses a sequence of subspaces, testing its limit, $W^*$ for evidence of a $c$-shrunk subspace. In the quiver representation context, we would like to instead use a sequence of subrepresentations. In this deterministic setting, we only need $|\F|>n$.

To do this, we again start with a random matrix $A$ in the blow-up, as above. Next, find a pseudo-inverse of $A$, a matrix $B$ so that $B$'s restriction to $\im(A)$ is the inverse to $A$'s restriction to a direct complement of $\ker(A)$. Note that $B$ is a block matrix as well, with blocks mapping each $W(y)$ for $\sigma(y)<0$ to each $W(x)$ with $\sigma(x)>0$. Let $I_x$ index the $|d\sigma(x)|$ copies of $W(x)$. Let $\pi_{x,i}:\bigoplus_{x\in Q_0} W(x)\,^{d\sigma_+(x)}\rightarrow W(x)$ be the projection to the $i$th copy of $W(x)$. Each projection can be thought of as coming from the action of $\End(\mathbf{P_0})$. Similarly, define this for vertices $y$ with $\sigma_{-}(y)>0$. 

For each block, take the projection $\pi_{y,i} B \pi_{x,j}$. This gives a linear map from $W(x)$ to $W(y)$. Construct a new quiver representation, $W^{+}$, on a new quiver $Q^{+}$ by adding arrows $p:y\rightarrow x$ for each block in the pseudo-inverse, with each $W^{+}(p)$ defined as $\pi_{y,i} B \pi_{x,j}$. 

Define a subspace at vertices $x$ with $\sigma(x)>0$ of $W^{+}$: 
$$K(x):=\sum_{i\in I_{x}} \pi_{x,i}\ker(A).$$
For all other vertices, define $K(y)=0.$ Let $W'$ be the smallest subrepresentation of $W^{+}$ containing each $K(x)$. Note that $W'$ must also be a subrepresentation of our original $W$. 

\begin{proposition}
For $W'$ as defined above, $\bigoplus_{x\in Q_0} W'(x)\,^{d\sigma_+(x)}$ is $cd$-shrunk, with image (under $\A^{[d]}$) $\bigoplus_{x\in Q_0} W'(x)\,^{d\sigma_-(x)}$. Thus, $W'$ is an optimal $\sigma$ witness.
\end{proposition}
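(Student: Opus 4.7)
The plan is to set $S := \bigoplus_{x\in Q_0} W'(x)^{d\sigma_+(x)}$ and $T := \bigoplus_{x\in Q_0} W'(x)^{d\sigma_-(x)}$ and to verify three claims in order: (i) $\ker(A)\subseteq S$; (ii) $\A^{[d]}(S)\subseteq T$; and (iii) a dimension squeeze forcing $A(S)=T$ together with $\sigma(\dimvec(W'))=c$. Both assertions of the proposition --- that $S$ is $cd$-shrunk with image $T$, and that $W'$ is an optimal $\sigma$-witness --- then fall out simultaneously.

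Step (i) is where the real content sits. For any $v\in\ker(A)$, each component $\pi_{x,i}(v)$ lies in $K(x)=\sum_{j}\pi_{x,j}(\ker A)$ by definition, and $K(x)\subseteq W'(x)$ because $W'$ is constructed as the smallest $W^+$-subrepresentation containing the $K(x)$. Hence $v\in\bigoplus_{x} K(x)^{d\sigma_+(x)}\subseteq S$. Since $A$ is chosen with $\rk A=\ncrk(\A^{[d]})=d\cdot\ncrk(\A)$, this gives $\dim\ker(A)=cd$ and therefore $\dim S-\dim A(S)\geq cd$. Step (ii) is essentially formal: $Q$ is a subquiver of $Q^+$, so $W'$ is in particular a $Q$-subrepresentation, and every block of a matrix in $\A^{[d]}$ is a linear combination of path maps $W(p)$ tensored with a $d\times d$ scalar, which preserves $W'$.

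Combining (i) and (ii) with the identity $\dim S-\dim T=d\,\sigma(\dimvec(W'))$ and the definition of $c$ as the maximum of $\sigma(\dimvec(W''))$ over subrepresentations $W''\subseteq W$, we obtain
\[
cd \;\leq\; \dim S - \dim A(S) \;\leq\; \dim S - \dim T \;=\; d\,\sigma(\dimvec(W')) \;\leq\; cd.
\]
All inequalities must be equalities: $\sigma(\dimvec(W'))=c$, so $W'$ is optimal, and $A(S)=T$, which sandwiches $\A^{[d]}(S)$ between $A(S)$ and $T$ and forces $\A^{[d]}(S)=T$. The one subtlety worth flagging in (i) is that the $W^+$-closure defining $W'$ may enlarge $W'(x)$ at source vertices beyond $K(x)$ via the new pseudo-inverse arrows of $Q^+$, but this is harmless: only the inclusion $K(x)\subseteq W'(x)$ is needed for the kernel containment, and the squeeze in (iii) automatically calibrates $W'$ to the correct size.
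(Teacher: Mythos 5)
Your step (iii) has a sign error that breaks the squeeze. From (i), $\ker A\subseteq S$ gives $\dim A(S)=\dim S-\dim\ker A=\dim S-cd$, so $\dim S-\dim A(S)=cd$. From (ii), $A(S)\subseteq\A^{[d]}(S)\subseteq T$, so $\dim A(S)\leq\dim T$, hence
\[
\dim S-\dim A(S)\;\geq\;\dim S-\dim T,
\]
the opposite of what you wrote. The chain therefore reads $cd=\dim S-\dim A(S)\geq \dim S-\dim T=d\,\sigma(\dimvec(W'))$, which only reproduces the a priori bound $\sigma(\dimvec(W'))\leq c$; it does not pinch. Your closing remark that ``the squeeze automatically calibrates $W'$ to the correct size'' is exactly the thing that fails.

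What is missing is the reverse containment $T\subseteq A(S)$, equivalently that $W'(y)$ at each sink vertex $y$ (where $\sigma(y)<0$) is already reached by $A$ applied to $S$. This is not a formal consequence of (i) and (ii): (ii) says the image stays inside $T$, not that it fills $T$. Establishing it requires using the \emph{minimality} of $W'$ as a $W^{+}$-subrepresentation together with the Wong-sequence characterization of the minimal $cd$-shrunk subspace --- the limit $W^{*}$ is the smallest subspace containing $\ker A$ and closed under $\A^{\{d\}}$ and the pseudo-inverse $B$, the $K(x)$ and the extra arrows of $Q^{+}$ correspond precisely to the $\End(\mathbf{P_0})\oplus\End(\mathbf{P_1})$-action, and so the smallest $W^{+}$-subrepresentation containing the $K(x)$ reproduces exactly $A^{-1}(W^{*})$ on the source side and $W^{*}$ on the target side. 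That identification is what the paper's proof carries out, and it is the content you have skipped. Once you have $A^{-1}(W^{*})=S$ and $W^{*}=T$, both $\A^{[d]}(S)=T$ and $\sigma(\dimvec(W'))=c$ follow. As written, your argument establishes that $S$ contains $\ker A$ and that $W'$ is a $Q$-subrepresentation with $\sigma(\dimvec(W'))\leq c$, but not optimality.
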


First, we claim that $\bigoplus_{x\in Q_0} W'(x)\,^{d\sigma_+(x)}$ is the minimal $cd$-shrunk subspace of $\A^{\{d\}}$. By construction, the Wong sequence algorithm returns the smallest subspace containing $\ker(A)$, and closed under $\A^{\{d\}}$ and our pseudo-inverse $B$. The $K(x)$ must remain inside the minimal shrunk subspace, as the projections come from $\End(\mathbf{P_0})$. Similarly, the new maps in $W^{+}$ come from the action of $\End(\mathbf{P_0})\bigoplus\End(\mathbf{P_1})$, so in finding the smallest subrepresentation, we must still remain in the minimal shrunk subspace (at positive vertices). So in finding the minimal representation of $W^+$ that contains each $K(x)$, $W'$, we get the smallest subspace $\bigoplus_{x\in Q_0} W'(x)\,^{d\sigma_+(x)}$ containing $\ker(A)$ and closed under $\A^{\{d\}}$ and $B$, i.e. the minimal $cd$-shrunk subspace. 

\begin{proposition}
Given a quiver representation $W$, a weight vector $\sigma$, $|\F|>n$, letting $n_x:=\dim(W(x))$, and $N=\sum_{x\in Q_0} n_x$, there is an algorithm polynomial time in the $n_x$ to find an optimal $\sigma$ witness.
\end{proposition}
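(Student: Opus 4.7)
The plan is to confirm that each step of the deterministic Wong-sequence procedure constructed in the previous proposition runs in time polynomial in $N=\sum_x n_x$. Two quantities dominate the analysis: $n=\min\{\sum\sigma_+(x)n_x,\sum\sigma_-(y)n_y\}$ and the blow-up factor $d\leq n-1$ supplied by \cite{DM18}, both polynomial in $N$ for fixed $\sigma$.

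First I would bound the reduction to the Kronecker quiver. Although an acyclic $Q$ may contain exponentially many paths, the matrix space $\A$ sits inside the $\mathrm{poly}(N)$-dimensional space of block matrices from $\bigoplus_x W(x)^{\sigma_+(x)}$ to $\bigoplus_y W(y)^{\sigma_-(y)}$, and a spanning set can be computed by a dynamic program traversing $Q$ in topological order: at each vertex $v$ one maintains, for every source $x$, the subspace of $\Hom(W(x),W(v))$ spanned by $\{W(p):p:x\to v\}$, updating along every incoming arrow. This produces at most $\mathrm{poly}(N)$ generators for $\A$ in polynomial time.

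Next, because $|\F|>n$, a matrix $A\in\A^{\{d\}}$ with $\rk A=\ncrk\A^{\{d\}}$ can be found in polynomial time: either by a Schwartz-Zippel-style random sampling as in \cite[Lemma~9]{IKQS15}, or by the deterministic construction of \cite{IQS17,IQS18}. Since $A$ has polynomial side length, computing a pseudo-inverse $B$, the projections $\pi_{y,i}B\pi_{x,j}$, the kernel of $A$, and the subspaces $K(x)=\sum_i\pi_{x,i}\ker A$ is standard polynomial-time linear algebra. Assembling the enlarged quiver $W^+$ and then computing the smallest subrepresentation $W'\subseteq W^+$ containing every $K(x)$ is a closure under $\mathrm{poly}(N)$ linear maps that stabilizes in at most $N$ rounds of Gaussian elimination.

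By the previous proposition, the resulting $W'$ is an optimal $\sigma$-witness, so putting the pieces together yields the claimed polynomial-time algorithm. The main obstacle is bookkeeping: one must verify that the single choice of $A$ is simultaneously generic enough (requiring $|\F|>n$) for Wong's sequence to return the minimal $cd$-shrunk subspace \emph{and} that the pseudo-inverse construction lands in the correct $\End(\mathbf{P_0})\oplus\End(\mathbf{P_1})$-module, so that the closure step actually produces a subrepresentation of the original $W$. Each individual check is already present in the cited algorithms and in the previous proposition; the work here is to package them together and track the sizes.
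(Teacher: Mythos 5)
Your proposal follows essentially the same route as the paper: pick a sufficiently generic matrix $A$ in a blow-up of the matrix space $\A$, compute a pseudo-inverse $B$, extract the subspaces $K(x)$ from $\ker(A)$, and then close them up under the arrows of the augmented quiver $W^+$ to obtain $W'$, with the closure stabilizing after at most $N$ rounds. One place where you go beyond the paper's argument is the cost of the reduction itself: the paper simply says ``we first construct $Q^+$ and $W^+$'' and does not address the fact that an acyclic quiver can contain exponentially many paths, so that naively listing $W(p)$ over all paths $p$ would blow up. Your observation that the relevant object is the span $\{W(p)\}$ inside a $\mathrm{poly}(N)$-dimensional block-matrix space, and that this span can be accumulated vertex-by-vertex in topological order, fills in a genuine gap in the paper's account; it is the right justification for the reduction being polynomial time. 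One small caution: both you and the paper silently treat the entries of $\sigma$ as bounded (you write ``for fixed $\sigma$''); without such an assumption the blown-up matrix has side length $\sim d\sum_x|\sigma(x)|n_x$, and the advertised bound ``polynomial in the $n_x$'' is not literally correct. It is worth making that hypothesis explicit, as you do, rather than leaving it implicit as the paper does.
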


Recalling the above discussion, we first construct $Q^+$ and $W^+$. To do this, we chose a random matrix in the $d=N-1$ blowup, $A$, and find its pseudo-inverse, $B$, which takes polynomial time ($\leq(dN)^3$). We then construct new linear maps for each of the $d^2\sigma_+\sigma_-$ blocks in $B$ by composing $B$ with projection maps. This composition is matrix multiplication, which can be done in polynomial time. Next we contruct $K(x)$ at each vertex $x$, which is the sum over the $d\sigma(x)$ projections of $\ker(A)$. We can find a basis for $\ker(A)$ itself in polynomial time using row reduction. Last, we use Algorithm \ref{findwitness} to loop through all our arrows $N$ times, to find the optimal $\sigma$ witness, $W'$ from the $K(x)$. This algorithm will stabilize at the $N$th loop or shorter, as each iteration of the outside loop will either raise the dimension of the current $W'$, or will not (in which case, we are done, we have found the final $W'$). We can increase the dimension at most $N$ times, so this must be a correct bound for the number of times to run the outer loop. 

\begin{alg}\label{findwitness}
Algorithm for finding $W'$.\\
\emph{Input:} Quiver $Q$, Representation $W$ of $Q$, subspaces $K(x)\subseteq W(x)$ for all vertices $x$.\\
\emph{Output:} Smallest subrepresentation $W'$ so that $K(x)\subseteq W'(x)$ for all vertices $x$.
\begin{algorithmic}[1]
\State $W'(x) = K(x)$ for all $x$;
\For{$i=1$ \textbf{ to } $N$}
    \For{$a \in Q_1$}
       \State $W'(ha)= W'(ha)+W(a)W'(ta)$;
    \EndFor
\EndFor
\end{algorithmic}
\end{alg}

\section{Non-commutative General Ext and Hom}

In \cite{Scho92}, Schofield studied the minimal dimension of $\Ext(V,W)$ for representations $V$ and $W$ of dimension vectors $\alpha$ and $\beta$. Define
$$Z^t(\alpha,\beta):=\{(V,W)\in \Rep_\alpha(Q)\times\Rep_\beta(Q)|\dim\Hom_Q(V,W)\geq t\}.$$
Each of these subsets of $\Rep_\alpha(Q)\times\Rep_\beta(Q)$ are closed. Take $t$ the minimal positive value of $\dim\Hom_Q(V,W)$. Then, $Z^{t+1}(\alpha,\beta)$ is a proper closed subset. We call the pair $(V,W)$ $(\alpha,\beta)$-general if they are in the (open and dense) complement of $Z^{t+1}(\alpha,\beta)$. On this complement, $\dim\Hom_Q(V,W)$ is constant, as is $\dim\Ext_Q(V,W)$. Schofield calls these generic hom and ext respectively:
\begin{eqnarray*}
\hom(\alpha,\beta)&=&\dim(\Hom_Q(V,W)), \text{ and}\\
\ext(\alpha,\beta)&=&\dim(\Ext_Q(V,W)).
\end{eqnarray*}

Crawley-Boevey generalized the generic hom in \cite{Craw96}, which we show along with the generalization of generic ext. To do this, fix a representation $W$ of $Q$, with dimension vector $\beta$. Define now 
$$Z^t(\alpha,W):=\{V\in \Rep_\alpha(Q)|\dim\Hom_Q(V,W)\geq t\}.$$
Again, each of these subsets are closed, and we take $t$ minimal so that $\dim\Hom_Q(V,W)$ is positive. We call $V$ $(\alpha,W)$-general if it is in the complement of $Z^{t+1}(\alpha,W)$.  

\begin{definition}
Let $V$ be an $(\alpha,W)$-general representation. We define the $(\alpha,W)$-general hom and ext as
\begin{eqnarray*}
\hom(\alpha,W)&=&\dim(\Hom_Q(V,W)), \text{ and}\\
\ext(\alpha,W)&=&\dim(\Ext_Q(V,W)).
\end{eqnarray*}
\end{definition}

\begin{lemma} We have 
$$
\ext(\alpha,W)\geq \max\{-\langle \alpha, \dimvec W'\rangle \mid \mbox{$W'$ factor representation of $W$}\}.
$$
\end{lemma}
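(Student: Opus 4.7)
The plan is to prove the lower bound factor-by-factor: for every factor representation $W'$ of $W$, I will show that $\dim\Ext(V,W)\geq -\langle \alpha,\dimvec W'\rangle$ holds for every $V$ of dimension vector $\alpha$, not just a general one. Then specializing to an $(\alpha,W)$-general $V$ gives the stated inequality after taking the maximum over $W'$.

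First, I would use the defining short exact sequence of a factor representation. Writing $W'=W/W''$, we have $0\to W''\to W\to W'\to 0$. Applying the contravariant functor $\Hom(V,-)$ and using that the path algebra of an acyclic quiver has global dimension one (so all higher Exts vanish), the long exact sequence terminates as
$$\cdots\to\Ext(V,W)\to\Ext(V,W')\to 0.$$
Surjectivity at the end gives $\dim\Ext(V,W)\geq\dim\Ext(V,W')$ for every representation $V$ with $\dimvec V=\alpha$.

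Next, I invoke the Euler–Ringel formula $\langle\dimvec V,\dimvec W'\rangle=\dim\Hom(V,W')-\dim\Ext(V,W')$, where $\langle\alpha,\beta\rangle=\sum_{x\in Q_0}\alpha(x)\beta(x)-\sum_{a\in Q_1}\alpha(ta)\beta(ha)$. Since $\dim\Hom(V,W')\geq 0$, this rearranges to $\dim\Ext(V,W')\geq -\langle\alpha,\dimvec W'\rangle$. Combined with the previous step, every $V$ of dimension $\alpha$ satisfies $\dim\Ext(V,W)\geq -\langle\alpha,\dimvec W'\rangle$.

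Finally, I take $V$ to be $(\alpha,W)$-general, so that $\dim\Ext(V,W)=\ext(\alpha,W)$. By Euler–Ringel, $\dim\Ext(V,W)$ and $\dim\Hom(V,W)$ differ by the constant $\langle\alpha,\dimvec W\rangle$, so $\dim\Ext(V,W)$ is also minimized on the complement of $Z^{t+1}(\alpha,W)$; hence $\ext(\alpha,W)$ is actually the minimum value of $\dim\Ext(V,W)$ as $V$ ranges over $\Rep_\alpha(Q)$. Taking the maximum over all factor representations $W'$ completes the proof. The argument is genuinely straightforward; the only point requiring care is confirming that the generic value equals the minimum (via the constancy of the Euler form on a fixed dimension vector), which is what licenses the quantifier switch from "every $V$" to the single general $V$ computing $\ext(\alpha,W)$.
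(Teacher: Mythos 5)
Your argument is correct and is essentially the paper's proof: apply $\Hom_Q(V,\cdot)$ to $0\to W''\to W\to W'\to 0$ to get a surjection $\Ext_Q(V,W)\twoheadrightarrow\Ext_Q(V,W')$, then use the Euler--Ringel form together with $\dim\Hom\geq 0$. The only cosmetic difference is that you keep a single $(\alpha,W)$-general $V$ fixed throughout and compare $\dim\Ext_Q(V,W')$ with $\dim\Ext_Q(V,W)$ directly, whereas the paper routes the same chain of inequalities through the generic quantity $\ext(\alpha,W')$; both are valid and rest on the same two facts.
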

\begin{proof}
If $V$ is a general representation of dimension $\alpha$, then applying $\Hom_Q(V,\cdot)$ to 
$$0\to W''\to W\to W'\to 0$$
gives
an exact sequence
$$
\cdots\to \Ext_Q(V,W)\to \Ext_Q(V,W')\to 0,
$$
so $\dim \Ext_Q(V,W')\leq \dim \Ext_Q(V,W)$ and $\ext(\alpha,W')\leq \ext(\alpha,W)$.
We get
$$
-\langle \alpha,\dimvec W'\rangle=\ext(\alpha,W')-\hom(\alpha,W')\leq \ext(\alpha,W')\leq \ext(\alpha,W).
$$
\end{proof}

\begin{definition}\label{nchomdef} The non-commutative ext and hom are defined by the following limits of ext and hom:
\begin{eqnarray*}
\ncext(\alpha,W)&=&\lim_{d\to\infty}\frac{\ext(d\alpha,W)}{d}\\
\nchom(\alpha,W)&=&\lim_{d\to\infty}\frac{\hom(d\alpha,W)}{d}.
\end{eqnarray*}
\end{definition}
Note that for every representation $W$ of dimension $\beta$, we have $\nchom(\alpha,W)-\ncext(\alpha,W)$ equal to $\langle\alpha,\beta\rangle$. These limits were originally studied in \cite{Craw96}, though we give them a name to highlight their connection to non-commutative rank, as seen in the next discussion and proposition.

We have a map $$\fmap{W}{\alpha}:\Rep_\alpha \longrightarrow\Hom\Big(\bigoplus_{x\in Q_0}\Hom\big(\F^{\alpha(x)},W(x)\big)\rightarrow\bigoplus_{a\in Q_1}\Hom\big(\F^{\alpha(ta)},W(ha)\big) \Big)$$
given by sending a representation $V$ to the map $\fmap{W}{\alpha}(V)$, which takes the set of $\varphi(x)$ from $\Hom(\F^{\alpha(x)},W(x))$ over all vertices $x$ to the set of maps $\varphi(ha)V(a)-W(a)\varphi(ta)$ over all arrows $a$. Note that the kernel of each $\fmap{W}{\alpha}(V)$ is $\Hom_Q(V,W)$, and and the cokernel is $\Ext_Q(V,W)$. From this point forward, we will refer to the image of $\fmap{W}{\alpha}$ (the set of $\fmap{W}{\alpha}(V)$ over all $V$), as simply $\fmap{W}{\alpha}$ itself. 

Next, we note that we can consider $\Rep_{d\alpha}$ as the blow-up of $\Rep_\alpha$ as follows. Each $Z$ in $\Rep_{d\alpha}$ is so that $Z(x)\cong \F^{\alpha(x)}\otimes U(x)$ for $U(x)\cong\F^d$. At the arrows, we have $Z(a)\cong\sum V_i(a)\otimes U_i(a)$, a finite sum where each $U_i(a)$ is a $d\times d$ matrix over $\F$, and each $V_i$ is from $\Rep_\alpha$. Now, given a $\overline{V}$ in $\Rep_{d\alpha}$, we get a map:
$$\Hom\Big(\bigoplus_{x\in Q_0}\Hom\big(\F^{d\alpha(x)},W(x)\big)\xrightarrow{\fmap{W}{d\alpha}(\overline{V})}\bigoplus_{a\in Q_1}\Hom\big(\F^{d\alpha(ta)},W(ha)\big) \Big).$$
Notice that we can find $\ncrk{(\fmap{W}{\alpha})}$ using $\ncrk{(\fmap{W}{d\alpha})}$ and dividing by $d$ since $\fmap{W}{d\alpha}$ is the $d$th blow-up of $\fmap{W}{\alpha}$. 

\begin{proposition}\label{samed}
The rank and non-commutative rank of $\fmap{W}{d\alpha}$ are equal if and only if $\ds\nchom(\alpha,W)=\frac{\hom(d\alpha,W)}{d}.$
\end{proposition}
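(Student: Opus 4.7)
The plan is to convert both the rank and the non-commutative rank of $\fmap{W}{d\alpha}$ into explicit expressions involving $\hom$ and $\nchom$, at which point the stated equivalence is immediate.

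First I would compute the rank of the matrix space $\fmap{W}{d\alpha}$. For any single $V\in\Rep_{d\alpha}(Q)$, the kernel of the linear map $\fmap{W}{d\alpha}(V)$ is precisely $\Hom_Q(V,W)$, so
$$\rk\fmap{W}{d\alpha}(V)=N(d\alpha)-\dim\Hom_Q(V,W),$$
where $N(d\alpha):=\sum_{x\in Q_0}d\alpha(x)\dim W(x)$ is the dimension of the common domain. The rank of the matrix space $\fmap{W}{d\alpha}$ is the maximum of $\rk\fmap{W}{d\alpha}(V)$ over $V$, which is attained exactly when $\dim\Hom_Q(V,W)$ is minimal; by upper semi-continuity (witnessed by the closed sets $Z^t(d\alpha,W)$) this minimum is $\hom(d\alpha,W)$ and is achieved precisely on the $(d\alpha,W)$-general locus. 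Hence $\rk\fmap{W}{d\alpha}=N(d\alpha)-\hom(d\alpha,W)$.

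Next I would compute the non-commutative rank. The observation made just before the proposition states that $\fmap{W}{d\alpha}$ is the $d$th tensor blow-up of $\fmap{W}{\alpha}$, so $\ncrk\fmap{W}{d\alpha}=d\cdot\ncrk\fmap{W}{\alpha}$. Using the blow-up characterization of non-commutative rank from Section~\ref{ncrkdefs} together with the formula from the previous paragraph,
$$\ncrk\fmap{W}{\alpha}=\lim_{k\to\infty}\frac{\rk\fmap{W}{k\alpha}}{k}=N(\alpha)-\lim_{k\to\infty}\frac{\hom(k\alpha,W)}{k}=N(\alpha)-\nchom(\alpha,W),$$
so $\ncrk\fmap{W}{d\alpha}=N(d\alpha)-d\cdot\nchom(\alpha,W)$.

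Subtracting the two expressions gives
$$\ncrk\fmap{W}{d\alpha}-\rk\fmap{W}{d\alpha}=\hom(d\alpha,W)-d\cdot\nchom(\alpha,W),$$
which is $\geq 0$ since $\rk\leq\ncrk$ for any matrix space. Therefore $\rk\fmap{W}{d\alpha}=\ncrk\fmap{W}{d\alpha}$ if and only if $\hom(d\alpha,W)/d=\nchom(\alpha,W)$, as claimed. The only step requiring a touch of care is the identification $\rk\fmap{W}{d\alpha}=N(d\alpha)-\hom(d\alpha,W)$, i.e.\ that the generic rank of the matrix space is realized on the $(d\alpha,W)$-general locus; once that semi-continuity observation is in place, the rest is bookkeeping with the blow-up formula and the limit definition of $\nchom$, and I would not expect any substantive obstacle.
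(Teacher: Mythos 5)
Your proposal is correct and follows essentially the same route as the paper: identify $\rk\fmap{W}{d\alpha}=\sum_x d\alpha(x)\dim W(x)-\hom(d\alpha,W)$ via the kernel computation on the $(d\alpha,W)$-general locus, then use the fact that $\fmap{W}{d\alpha}$ is the $d$th blow-up of $\fmap{W}{\alpha}$ together with the limit characterization of $\ncrk$ and $\nchom$. You simply spell out the two sides and invoke $\rk\leq\ncrk$ explicitly, whereas the paper phrases the same conclusion as a matching of the maximizing $d$ on both sides.
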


For a $(d\alpha,W)$-general $\overline{V}$ in $\Rep_{d\alpha}$, the kernel of $\fmap{W}{d\alpha}(\overline{V})$ is of minimal dimension. So, $\rk(\fmap{W}{d\alpha})=\sum d\alpha(x)\beta(x)-\hom(d\alpha,W)$. We get $$\frac{\rk \fmap{W}{d\alpha}}{d}=\sum\alpha(x)\beta(x)-\frac{\hom(d\alpha,W)}{d},$$
showing that $d$ which maximizes the left-side (giving us the non-commutative rank), maximizes the right side (minimizing $\frac{\hom(d\alpha,W)}{d}$, giving us the non-commutative hom).

\begin{corr}
Given dimension vector $\alpha$, and a representation $W$ of dimension $\beta$, the $d$ in the limit of definition \ref{nchomdef} can be chosen to be $$\min\Big\{\sum_{x\in Q_0}\alpha(x)\beta(x)-1,\sum_{a\in Q_1}\alpha(ta)\beta(ha)-1\Big\}$$.
\end{corr}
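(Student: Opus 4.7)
The strategy is to reduce the claim to Proposition~\ref{samed} and then invoke the Derksen--Makam blow-up bound recalled at the start of Section~\ref{ncrkdefs}. Set $N_0:=\sum_{x\in Q_0}\alpha(x)\beta(x)$ and $N_1:=\sum_{a\in Q_1}\alpha(ta)\beta(ha)$, so that $\fmap{W}{\alpha}$ is a rectangular matrix space of shape $N_1\times N_0$, and its $d$-th tensor blow-up is, by the construction of $\Rep_{d\alpha}$ as sums $\sum V_i(a)\otimes U_i(a)$ described just before Proposition~\ref{samed}, canonically identified with $\fmap{W}{d\alpha}$ itself. Proposition~\ref{samed} then reduces the corollary to showing that $\rk \fmap{W}{d\alpha}=\ncrk \fmap{W}{d\alpha}$ whenever $d\geq \min\{N_0-1,N_1-1\}$.

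For this, I would write $r:=\ncrk\fmap{W}{\alpha}\leq\min(N_0,N_1)$ and pick an $r\times r$ submatrix of $\fmap{W}{\alpha}$ of full non-commutative rank. The Derksen--Makam theorem applied to this square submatrix produces a matrix in its $d$-th blow-up of rank $rd$ as soon as $d\geq r-1$. Embedding this blow-up into $\fmap{W}{d\alpha}$ by padding with zero rows and columns shows $\rk\fmap{W}{d\alpha}\geq rd$, while the multiplicativity of non-commutative rank under blow-ups provides the matching upper bound $\rk\fmap{W}{d\alpha}\leq \ncrk\fmap{W}{d\alpha}=dr$. Hence the two quantities agree for every $d\geq r-1$, and since $r\leq \min(N_0,N_1)$ we have $r-1\leq \min\{N_0-1,N_1-1\}$, so the equality holds at the claimed value of $d$. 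Proposition~\ref{samed} then delivers $\hom(d\alpha,W)/d=\nchom(\alpha,W)$ at that $d$.

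The only conceptually delicate point is the identification of $(\fmap{W}{\alpha})^{\{d\}}$ with $\fmap{W}{d\alpha}$ and the reduction of the Derksen--Makam statement, which is naturally phrased for square matrix spaces, to the present rectangular setting via a full-ncrk square submatrix; both moves are already outlined in Section~\ref{ncrkdefs} in the course of establishing the bound $\rk\mathcal{A}^{\{d\}}=rd$ for $d\geq r-1$. Once these two identifications are in place, nothing more is needed: the rest of the argument is just the dimension inequality $r\leq \min(N_0,N_1)$ and an appeal to Proposition~\ref{samed}.
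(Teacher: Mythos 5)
Your proof is correct and takes essentially the same route as the paper: both reduce via Proposition~\ref{samed} to showing $\rk\fmap{W}{d\alpha}=\ncrk\fmap{W}{d\alpha}$ and then invoke the Derksen--Makam bound of $n-1$ adapted to the rectangular case by restricting to a full-ncrk square submatrix, exactly as sketched in Section~\ref{ncrkdefs}. You merely spell out the steps (the identification of $\fmap{W}{d\alpha}$ with the blow-up, the padding argument, and the inequality $r-1\leq\min\{N_0-1,N_1-1\}$) that the paper's two-sentence proof leaves implicit.
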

Recall the bound for non-commutative rank blow-ups from \cite{DM18} is $n-1$, where $n$ is the dimension of both the domain and co-domain. We may not have a space of square matrices, so a large enough $d$ will be found when we first reach either $\sum_{x\in Q_0}\alpha(x)\beta(x)-1$ or $\sum_{a\in Q_1}\alpha(ta)\beta(ha)-1$. 

\begin{theorem}\label{ncext}
We have
$$
\ncext(\alpha,W)=\max\{-\langle\alpha,\dimvec W'' \rangle \mid \mbox{$W''$ factor representation of $W$}\}.
$$
\end{theorem}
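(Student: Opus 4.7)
For the $\geq$ direction, the plan is to apply the preceding lemma to $d\alpha$ in place of $\alpha$: this gives $\ext(d\alpha,W)\geq -d\langle\alpha,\dimvec W''\rangle$ for each factor $W''$, so dividing by $d$ and passing to the limit in Definition~\ref{nchomdef} yields $\ncext(\alpha,W)\geq -\langle\alpha,\dimvec W''\rangle$. Taking the maximum over all factors completes this direction.

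For the $\leq$ direction, I would use the identity $\nchom(\alpha,W)-\ncext(\alpha,W)=\langle\alpha,\beta\rangle$ together with the correspondence $W'\mapsto W''=W/W'$ to reduce the problem to proving
\[
\nchom(\alpha,W)\;\leq\;\max_{W'\subseteq W}\langle\alpha,\dimvec W'\rangle.
\]
By Proposition~\ref{samed}, $\nchom(\alpha,W)=\dim D-\ncrk\fmap{W}{\alpha}$ with $D=\bigoplus_{x\in Q_0}\Hom(\F^{\alpha(x)},W(x))$, and the Fortin--Reutenauer characterization of non-commutative rank rewrites this as the maximum of $\dim U-\dim\fmap{W}{\alpha}\,U$ over subspaces $U\subseteq D$. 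I would then apply the reduction of Section~\ref{reduction} with the weight $\sigma(y)=\alpha(y)-\sum_{a:\,ha=y}\alpha(ta)$, chosen precisely so that $\sigma(\dimvec W')=\langle\alpha,\dimvec W'\rangle$ for any subrepresentation $W'$. With $\mathbf{P_0}=\bigoplus_x P_x^{\alpha(x)}$ and $\mathbf{P_1}=\bigoplus_a P_{ha}^{\alpha(ta)}$, the matrix space $\fmap{W}{\alpha}$ realizes $A(\varphi)=\Hom(\varphi,W)$ for the relevant $\varphi:\mathbf{P_1}\to\mathbf{P_0}$. The minimal maximum-deficiency shrunk subspace is $\GL(\alpha)$-invariant by the argument of Lemma~\ref{combinecshrunk}; the center of $\GL(\alpha)$ together with Schur's lemma for the $\GL(\alpha(x))$-action force it to decompose as $\bigoplus_x\Hom(\F^{\alpha(x)},W'(x))$ for subspaces $W'(x)\subseteq W(x)$. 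The $\End(\mathbf{P_0})$- and $\End(\mathbf{P_1})$-invariance from Lemma~\ref{fixedlem} then promotes $W'$ to a genuine subrepresentation of $W$, and a direct computation gives $\langle\alpha,\dimvec W'\rangle=\nchom(\alpha,W)$.

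The main obstacle is that Section~\ref{reduction} is written under the standing assumption $\sigma_+(y)\sigma_-(y)=0$, whereas our choice of $\sigma$ allows both $\sigma_+(y)=\alpha(y)$ and $\sigma_-(y)=\sum_{a:\,ha=y}\alpha(ta)$ to be positive at the same vertex. The extraction of a subrepresentation from the minimal shrunk subspace must therefore be reverified in this enlarged regime, in particular checking that the $\End(\mathbf{P_1})$-closure forces the collection $W'(x)$ to be closed under every structure map $W(a)$, not merely those compatible with the original ``pure'' configuration. Once this coherence is established, combining with the $\geq$ direction gives the claimed equality.
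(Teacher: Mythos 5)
Your proposal is correct and follows essentially the same route as the paper: use Proposition~\ref{samed} to identify $\nchom(\alpha,W)$ with a non-commutative rank defect, apply the Section~\ref{reduction} machinery to extract a subrepresentation $W'$ from the minimal shrunk subspace of $\fmap{W}{d\alpha}$, and invoke the preceding lemma for the opposite inequality. The one place you are more careful than the paper is in flagging that the relevant $\mathbf{P_0}$ and $\mathbf{P_1}$ may share projective summands (so the standing disjointness $\sigma_+(x)\sigma_-(x)=0$ from Section~\ref{reduction} is violated); the paper cites that discussion without comment, but as you observe Lemma~\ref{fixedlem} and the subrepresentation-extraction argument never actually use the disjointness --- only the $\End(\mathbf{P_0})$ and $\End(\mathbf{P_1})$ module structure --- so the reduction does carry over.
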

\begin{proof}

Choose $d$ so that $\ncext(d\alpha,W)$ equals $\frac{\ext(d\alpha,W)}{d}$. Look at the set of maps:
$$\Hom\Big(\bigoplus_{x\in Q_0}\Hom\big(\F^{\alpha(x)},W(x)\big)\xrightarrow{\fmap{W}{d\alpha}(\overline{V})}\bigoplus_{a\in Q_1}\Hom\big(\F^{\alpha(ta)},W(ha)\big) \Big),$$
for all representations $\overline{V}$ in $\Rep_{d\alpha}$. By Proposition \ref{samed}, this set of maps has non-commutative rank equal to its rank. So we can find the minimal $c$-shrunk subspace, which:
\begin{enumerate}
    \item has the form $\bigoplus_{x\in Q_0}\Hom\big(\F^{d\alpha(x)},W'(x)\big)$, for some subrepresentation $W'$ of $W$ (from discussion in section \ref{reduction}), and
    \item has image of the form $\bigoplus_{a \in Q_1}\Hom\big(\F^{d\alpha(ta)},W'(ha)\big)$.
\end{enumerate}
So we get $c=d\sum\alpha(x)\dim(W'(x))-d\sum\alpha(ta)\dim(W'(ha))=\langle d\alpha,\dimvec(W')\rangle$, but $c$ is the non-commutative rank, so also can be found by $\sum d\alpha(x)\beta(x)-\rk(\fmap{W}{d\alpha})=\hom(d\alpha,W).$ This leaves us with $\frac{\hom(d\alpha,W)}{d}=\langle\alpha,\dimvec(W')\rangle$ after dividing by $d$.
As for non-commutative ext, we then get $\ncext(\alpha,W)=\nchom(\alpha,W)-\langle\alpha,\beta\rangle$, finally leaving us with $\ncext(\alpha,W)=-\langle\alpha,\dimvec W'' \rangle$, for $W''=W/W'.$

\end{proof}

We note that we can dually fix a representation $V$, and look at $\hom(V,\beta)$ and $\ext(V,\beta)$ to define $\nchom(V,\beta)$ and $\ncext(V,\beta)$. 
\begin{definition} The non-commutative ext and hom are defined by the following limits of ext and hom:
\begin{eqnarray*}
\ncext(V,\beta)&=&\lim_{d\to\infty}\frac{\ext(V,d\beta)}{d}\\
\nchom(V,\beta)&=&\lim_{d\to\infty}\frac{\hom(V,d\beta)}{d}
\end{eqnarray*}
\end{definition}
\begin{theorem}
We have
$$
\ncext(V,\beta)=\max\{-\langle \dimvec V', \beta \rangle \mid \mbox{$V'$ subrepresentation of $V$}\}.
$$
\begin{proof}
The proof follows from duality of theorem \ref{ncext}. We note that this can also be seen by using Corollary 1 from \cite{Craw96}, by subtracting $\langle \dimvec V, \beta \rangle$.
\end{proof}
\end{theorem}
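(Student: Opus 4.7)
The plan is to derive the statement from Theorem~\ref{ncext} by passing to the opposite quiver. Let $Q^{op}$ denote the quiver obtained from $Q$ by reversing all arrows, and for any representation $M$ of $Q$ let $M^*$ be the representation of $Q^{op}$ obtained by dualizing each vector space $M(x)$ and transposing each linear map $M(a)$. This construction gives an equivalence $\Rep(Q) \simeq \Rep(Q^{op})$ under which dimension vectors are preserved, $\Hom_Q(V,W) \cong \Hom_{Q^{op}}(W^*, V^*)$, and therefore $\Ext_Q(V,W) \cong \Ext_{Q^{op}}(W^*, V^*)$. In particular, $W$ is $(V, d\beta)$-general in $\Rep_{d\beta}(Q)$ exactly when $W^*$ is $(d\beta, V^*)$-general in $\Rep_{d\beta}(Q^{op})$, so
$$\ext_Q(V, d\beta) = \ext_{Q^{op}}(d\beta, V^*),$$
and dividing by $d$ and letting $d \to \infty$ yields $\ncext(V,\beta) = \ncext_{Q^{op}}(\beta, V^*)$.

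Next I would apply Theorem~\ref{ncext} directly to $Q^{op}$ with the fixed representation $V^*$ and dimension vector $\beta$. This gives
$$\ncext_{Q^{op}}(\beta, V^*) = \max\{-\langle \beta, \dimvec N \rangle_{Q^{op}} \mid N \text{ is a factor representation of } V^* \text{ in } \Rep(Q^{op})\}.$$
To translate this back to a statement about $V$ and $Q$, I would use two observations: first, the Euler forms satisfy $\langle \gamma, \delta\rangle_{Q^{op}} = \langle \delta, \gamma\rangle_Q$, since reversing arrows swaps head and tail in the sum defining the Euler form; and second, under the duality $M \mapsto M^*$, a factor representation $V^* \twoheadrightarrow N$ in $\Rep(Q^{op})$ dualizes to an injection $N^* \hookrightarrow V$ in $\Rep(Q)$, so setting $V' = N^*$ identifies factor representations of $V^*$ with subrepresentations of $V$, with matching dimension vectors $\dimvec V' = \dimvec N$. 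Substituting gives exactly
$$\ncext(V,\beta) = \max\{-\langle \dimvec V', \beta\rangle \mid V' \text{ is a subrepresentation of } V\}.$$

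The only technical point requiring real care is verifying that the $Q^{op}$-equivalence interchanges generality on the two sides and is compatible with the blow-up/shrunk-subspace construction underlying Theorem~\ref{ncext}; once this bookkeeping is done, everything reduces to standard duality. As the theorem already notes, one can also bypass $Q^{op}$ entirely by mimicking the proof of Theorem~\ref{ncext} directly: define a map $g^V_\beta : \Rep_\beta(Q) \to \Hom\!\left(\bigoplus_x \Hom(V(x),\F^{\beta(x)}),\, \bigoplus_a \Hom(V(ta), \F^{\beta(ha)})\right)$ sending $W$ to $\varphi \mapsto W(a)\varphi(ta) - \varphi(ha)V(a)$, whose kernel and cokernel at $W$ are $\Hom_Q(V,W)$ and $\Ext_Q(V,W)$; then repeat the blow-up and minimal $c$-shrunk-subspace analysis, this time producing a subrepresentation $V' \subseteq V$ rather than a factor representation of $W$. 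Either route leads to the same conclusion.
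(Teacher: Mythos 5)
Your proposal is correct and follows exactly the approach the paper itself invokes: the paper's proof is the one-line remark that the result ``follows from duality of theorem \ref{ncext},'' and your argument via the opposite quiver $Q^{op}$, the contravariant duality $M \mapsto M^*$, the Euler-form identity $\langle \gamma,\delta\rangle_{Q^{op}} = \langle \delta,\gamma\rangle_Q$, and the bijection between factor representations of $V^*$ and subrepresentations of $V$, is precisely what that remark means, spelled out carefully. One small note: once you have $\ext_Q(V,d\beta)=\ext_{Q^{op}}(d\beta,V^*)$ and the bookkeeping identities, you may apply Theorem \ref{ncext} to $Q^{op}$ as a black box, so the worry about compatibility of the $Q^{op}$-equivalence with the blow-up/shrunk-subspace machinery is unnecessary.
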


\begin{corr}
For large enough $|\F|$, there are both deterministic and randomized algorithms for calculating $\ncext(\alpha,W),\nchom(\alpha,W),\ncext(V,\beta),$ and $\nchom(V,\beta)$.
\begin{proof}
We can apply any of the algorithms used to find $c$-shrunk subspaces to the set of maps $\fmap{W}{\alpha}({V})$ or $\fmap{\beta}{V}(W)$ respectively, and use the dimension of $c$ to calculate the non-commutative ext and hom.
\end{proof}
\end{corr}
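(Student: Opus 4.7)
The plan is to observe that each of the four quantities is determined by the rank or non-commutative rank of a concrete matrix space built from $\fmap{W}{\alpha}$ (or its dual $\fmap{\beta}{V}$), and then to invoke the existing $c$-shrunk subspace algorithms as black boxes.

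First I would handle $\nchom(\alpha,W)$. From the discussion preceding Proposition \ref{samed}, for any $V \in \Rep_\alpha(Q)$ the rank of $\fmap{W}{\alpha}(V)$ equals $\sum_{x \in Q_0}\alpha(x)\beta(x) - \dim \Hom_Q(V,W)$, so for an $(\alpha,W)$-general $V$ this yields $\rk \fmap{W}{\alpha} = \sum_x \alpha(x)\beta(x) - \hom(\alpha,W)$. Applying the same identity to the $d$-fold blow-up and using the tensor blow-up characterization $\ncrk(\fmap{W}{\alpha}) = \lim_{d\to\infty} \rk(\fmap{W}{d\alpha})/d$, I obtain
\[
\nchom(\alpha,W) = \sum_{x \in Q_0}\alpha(x)\beta(x) - \ncrk(\fmap{W}{\alpha}).
\]
So computing $\nchom(\alpha,W)$ reduces to computing $\ncrk(\fmap{W}{\alpha})$. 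I would do this by fixing an $\F$-basis for $\Rep_\alpha(Q)$, evaluating $\fmap{W}{\alpha}$ on each basis element to produce a spanning set for the matrix space, and then feeding this presentation into either the deterministic polynomial-time algorithm of \cite{IQS18} (which requires $|\F| > n$) or the randomized algorithm of \cite[Corollary~1.5]{IQS17} (which requires $|\F| > 2n$), where $n = \min\{\sum_x \alpha(x)\beta(x),\, \sum_a \alpha(ta)\beta(ha)\}$. The blow-up factor required is polynomial by the corollary immediately preceding the statement, so the overall running time stays polynomial in the input size.

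For $\ncext(\alpha,W)$, I would use the identity $\ncext(\alpha,W) = \nchom(\alpha,W) - \langle \alpha, \beta \rangle$ recorded just after Definition \ref{nchomdef}, which produces the answer from the previous computation at essentially no additional cost. For the dual quantities $\nchom(V,\beta)$ and $\ncext(V,\beta)$, I would repeat the entire argument with $\fmap{\beta}{V}$ in place of $\fmap{W}{\alpha}$: by duality, the kernel and cokernel of $\fmap{\beta}{V}(W)$ compute $\Hom_Q(V,W)$ and $\Ext_Q(V,W)$ in exactly the same way, so the same reduction to an $\ncrk$ computation goes through verbatim.

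The main obstacle, such as it is, is bookkeeping — one must verify that $\fmap{W}{\alpha}$ (respectively $\fmap{\beta}{V}$) can actually be presented to the cited algorithms as a matrix space in the expected input format, and that the polynomial bounds on $d$ coming from the preceding corollary are compatible with the blow-up bounds built into \cite{IQS18,DM18}. Both checks amount to writing down explicit bases as already described in Section~\ref{reduction}; there is no genuinely new analytic content, and the four algorithmic statements all follow uniformly from the single reduction above.
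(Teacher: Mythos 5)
Your proposal is correct and follows essentially the same approach as the paper: reduce each of the four quantities to a non-commutative rank computation for the matrix space $\fmap{W}{\alpha}$ or $\fmap{\beta}{V}$, then invoke the existing $c$-shrunk subspace algorithms. The paper states this in a single sentence; you simply spell out the arithmetic (the identity $\nchom(\alpha,W) = \sum_x \alpha(x)\beta(x) - \ncrk(\fmap{W}{\alpha})$, the relation $\ncext = \nchom - \langle\alpha,\beta\rangle$, and the dual setup), which is a faithful elaboration rather than a different route.
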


\section{Acknowledgements}
The author was supported by NSF grant DGE 1256260, and would additionally like to thank Harm Derksen for direction and conversation, and Calin Chindris and Daniel Kline for comments on an earlier draft of this paper.

\bibliography{bb}{}

\begin{thebibliography}{IKQS15}

\bibitem[Ami66]{Amit66}
SA~Amitsur.
\newblock Rational identities and applications to algebra and geometry.
\newblock {\em Journal of Algebra}, 3(3):304--359, 1966.

\bibitem[CB96]{Craw96}
William Crawley-Boevey.
\newblock On homomorphisms from a fixed representation to a general
  representation of a quiver.
\newblock {\em Transactions of the American Mathematical Society},
  348(5):1909--1919, 1996.

\bibitem[CK20]{CK20a}
Calin Chindris and Daniel Kline.
\newblock Edmonds' problem and the membership problem for orbit semigroups of
  quiver representations.
\newblock {\em arXiv preprint arXiv:2008.13648}, 2020.

\bibitem[CK21]{CK21B}
Calin Chindris and Daniel Kline.
\newblock Simultaneous robust subspace recovery and semi-stability of quiver
  representations.
\newblock {\em Journal of Algebra}, 577:210--236, 2021.

\bibitem[Coh95]{Cohn95}
Paul~M Cohn.
\newblock Encyclopedia of mathematics and its applications, skew fields: Theory
  of general division rings, 1995.

\bibitem[CR99]{CR99}
Paul~Moritz Cohn and Christophe Reutenauer.
\newblock On the construction of the free field.
\newblock {\em International journal of Algebra and Computation},
  9(03n04):307--323, 1999.

\bibitem[DF15]{DF15}
Harm Derksen and Jiarui Fei.
\newblock General presentations of algebras.
\newblock {\em Advances in Mathematics}, 278:210--237, 2015.

\bibitem[DL78]{DL78}
Richard~A. Demillo and Richard~J. Lipton.
\newblock A probabilistic remark on algebraic program testing.
\newblock {\em Information Processing Letters}, 7(4):193 -- 195, 1978.

\bibitem[DM18a]{DM18}
Harm Derksen and Visu Makam.
\newblock Degree bounds for semi-invariant rings of quivers.
\newblock {\em Journal of Pure and Applied Algebra}, 222(10):3282--3292, 2018.

\bibitem[DM18b]{DM18b}
Harm Derksen and Visu Makam.
\newblock On non-commutative rank and tensor rank.
\newblock {\em Linear and Multilinear Algebra}, 66(6):1069--1084, 2018.

\bibitem[Edm67]{Edm67}
Jack Edmonds.
\newblock Systems of distinct representatives and linear algebra.
\newblock {\em J. Res. Nat. Bur. Standards Sect. B}, 71(4):241--245, 1967.

\bibitem[FR04]{FR04}
Marc Fortin and Christophe Reutenauer.
\newblock Commutative/noncommutative rank of linear matrices and subspaces of
  matrices of low rank.
\newblock {\em S{\'e}minaire Lotharingien de Combinatoire}, 52:B52f, 2004.

\bibitem[GR91]{GR91}
Israel~M Gel'fand and Vladimir~S Retakh.
\newblock Determinants of matrices over noncommutative rings.
\newblock {\em Functional Analysis and Its Applications}, 25(2):91--102, 1991.

\bibitem[HW14]{HW14}
Pavel Hrube{\v{s}} and Avi Wigderson.
\newblock Non-commutative arithmetic circuits with division.
\newblock In {\em Proceedings of the 5th conference on Innovations in
  theoretical computer science}, pages 49--66, 2014.

\bibitem[IKQS15]{IKQS15}
G{\'a}bor Ivanyos, Marek Karpinski, Youming Qiao, and Miklos Santha.
\newblock Generalized {W}ong sequences and their applications to {E}dmonds'
  problems.
\newblock {\em Journal of Computer and System Sciences}, 81(7):1373--1386,
  2015.

\bibitem[IMQ21]{IMQ21}
G{\'a}bor Ivanyos, Tushant Mittal, and Youming Qiao.
\newblock Symbolic determinant identity testing and non-commutative ranks of
  matrix lie algebras.
\newblock {\em arXiv preprint arXiv:2109.06403}, 2021.

\bibitem[IQS17]{IQS17}
G{\'a}bor Ivanyos, Youming Qiao, and KV~Subrahmanyam.
\newblock Non-commutative {E}dmonds’ problem and matrix semi-invariants.
\newblock {\em Computational Complexity}, 26(3):717--763, 2017.

\bibitem[IQS18]{IQS18}
G{\'a}bor Ivanyos, Youming Qiao, and KV~Subrahmanyam.
\newblock Constructive non-commutative rank computation is in deterministic
  polynomial time.
\newblock {\em Computational Complexity}, 27(4):561--593, 2018.

\bibitem[Kin94]{King94}
Alastair~D King.
\newblock Moduli of representations of finite dimensional algebras.
\newblock {\em The Quarterly Journal of Mathematics}, 45(4):515--530, 1994.

\bibitem[Sch80]{Schw80}
Jacob~T Schwartz.
\newblock Fast probabilistic algorithms for verification of polynomial
  identities.
\newblock {\em Journal of the ACM (JACM)}, 27(4):701--717, 1980.

\bibitem[Sch92]{Scho92}
Aidan Schofield.
\newblock General representations of quivers.
\newblock {\em Proceedings of the London Mathematical Society}, 3(1):46--64,
  1992.

\bibitem[Won74]{Wong74}
Kai-Tak Wong.
\newblock The eigenvalue problem $\lambda$tx+ sx.
\newblock {\em Journal of differential Equations}, 16(2):270--280, 1974.

\bibitem[Zip79]{Zip79}
Richard Zippel.
\newblock Probabilistic algorithms for sparse polynomials.
\newblock In {\em International symposium on symbolic and algebraic
  manipulation}, pages 216--226. Springer, 1979.

\end{thebibliography}
\bibliographystyle{alpha}
\end{document}